\documentclass[12pt]{smfart}


\usepackage[T1]{fontenc}
\usepackage{lmodern,amssymb,bm}

\usepackage[french, english]{babel}

\usepackage[utf8]{inputenc}


\usepackage{amsthm,amsmath,amssymb,amsfonts}
\usepackage{enumerate}
\usepackage{tikz}
\usetikzlibrary{arrows, matrix, shapes, patterns, decorations.pathmorphing}

\newcommand{\R}{\ensuremath{\mathbb{R}}}
\newcommand{\Z}{\ensuremath{\mathbb{Z}}}

\newcommand{\N}{\ensuremath{\mathbb{N}}}

\newcommand{\T}{\ensuremath{\mathbb{T}}}
\newcommand{\Sp}{\ensuremath{\mathbb{S}}}
\newcommand{\F}{\ensuremath{\mathcal{F}}}
\newcommand{\fix}{\ensuremath{\operatorname{Fix}}}
\newcommand{\dom}{\ensuremath{\operatorname{dom}}}
\newcommand{\inte}{\ensuremath{\operatorname{int}}}
\newcommand{\homeo}{\ensuremath{\operatorname{Homeo}}}
\newcommand{\id}{\ensuremath{\operatorname{id}}}
\newcommand{\wt}{\widetilde}

\usepackage[margin=3cm]{geometry}

\theoremstyle{plain}
\newtheorem{defi}{Définition}[section]
\newtheorem{prop}[defi]{Proposition}
\newtheorem{theo}[defi]{Théorème}

\newtheorem{coro}[defi]{Corollaire}

\theoremstyle{definition}
\newtheorem{lemm}[defi]{Lemme}
\theoremstyle{remark}

\theoremstyle{plain}


%

\title[Théorie de forçage des homéomorphismes de surface]{Théorie de forçage des homéomorphismes de surface\\
{[}d'après Le Calvez et Tal{]}}

\author[Pierre-Antoine Guih\'eneuf]{Pierre-Antoine Guih\'eneuf\\
Séminaire \bsc{Bourbaki}}

\date{Séminaire \bsc{Bourbaki}\\
$72^\text{e}$ année, 2019–2020, $\text{n}^\text{o}$ 1171\\
Janvier 2020}
\address{Sorbonne Universit{\'e}, Universit{\'e} Paris Diderot, CNRS, Institut de Math{\'e}matiques de Jussieu-Paris Rive Gauche, IMJ-PRG, F-75005, Paris, France}
\email{pierre-antoine.guiheneuf@imj-prg.fr}

%

\begin{document}

\begin{abstract}
En 1912 Brouwer publiait son théorème de translation, qui implique par exemple qu’un homéomorphisme du plan préservant l'orientation et ayant un point périodique possède aussi un point fixe. Ce théorème a donné lieu à bon nombre de développements, débouchant entres autres sur l'obtention par Le Calvez d'un feuilletage de Brouwer équivariant pour les homéomorphismes de surface homotopes à l'identité. Récemment, Le Calvez et Tal ont utilisé ce feuilletage pour construire une théorie de forçage par essence topologique qui, à l'instar du théorème de Brouwer, permet de déduire l’existence de nouvelles orbites à partir de certaines propriétés dynamiques de l'homéomorphisme. L'exposé décrira les principes généraux de cette théorie, ainsi que quelques unes de ses très nombreuses applications.
\end{abstract}

\begin{altabstract}
In 1912 Brouwer published his translation theorem, which implies, for example, that an orientation preserving homeomorphism of the plane having a periodic point also has a fixed point. This theorem has given rise to a number of developments, leading among other things
to Le Calvez's proof of the existence of a Brouwer foliation for surface homeomorphisms homotopic to identity. Recently, Le Calvez and Tal used this foliation to construct a forcing theory intrinsically topological which, like Brouwer's theorem, allows to deduce the existence of new orbits from certain dynamic properties of homeomorphism. The exposé will describe the general principles of this theory, as well as some of its many applications. 
\end{altabstract}


\maketitle

\tableofcontents



%
%
%
%
%
%
%
%
%
%
%
%
%

\section*{Introduction}

Cet exposé de dynamique commence de manière terriblement banale par une idée géniale de Poincaré. Quelques mois avant sa mort, s'excusant de n'avoir \og jamais présenté au public un travail aussi inachevé\fg, celui-ci se résout à publier des résultats partiels car, vu son âge, il n'est pas sûr de pouvoir les reprendre un jour. Son article est en grande partie motivé par l'étude des orbites périodiques du problème à trois corps dans le cas où, contrairement à ses travaux antérieurs, les masses ne sont pas petites. Utilisant une intégrale première et ce qui est désormais appelé une section de Poincaré, il ramène l'existence d'orbites périodiques à ce que certains nomment \og le dernier théorème de géométrie de Poincaré\fg.

\begin{theo}[Poincaré-Birkhoff]\label{ThPB}
Soit un homéomorphisme de l'anneau fermé préservant l'aire, et tel que les nombres de rotation de ses restrictions aux deux cercles du bord sont de signes opposés\footnote{Ce qui sous-entend que ceux-ci sont préservés, et que l'homéomorphisme préserve l'orientation.}. Alors cet homéomorphisme possède au moins deux points fixes.
\end{theo}

Poincaré explique qu'il sait tout de même résoudre un grand nombre de cas particuliers. \cite{MR1500933} démontrera rigoureusement ce théorème (pour l'existence d'un point fixe seulement) quelques mois après le décès de Poincaré (voir aussi \cite{IDM}) dans un tour de force qui devient l'un des tous premiers résultats difficiles en dynamique topologique.

Ce théorème aura une influence considérable à la fois pour la fondation de la topologie symplectique, avec la conjecture d'Arnold, et de la dynamique topologique sur les surfaces, dont il sera question dans cet exposé. 
\medskip

Une seconde motivation de la dynamique sur les surfaces est plus pragmatique : il s'agit de faire un compromis entre la réalité physique de systèmes ayant des espaces de configuration de très grande dimension et la puissance des outils mathématiques à notre disposition.
\medskip

Le but de cet exposé est de présenter les idées d'une théorie du forçage pour les homéomorphismes de surfaces, fondée dans \cite{LCT1} et \cite{LCT2}, et qui est en quelque sorte l'aboutissement de toute une série de travaux visant à améliorer la théorie de Brouwer. Si celle-ci prend sa source dans le 5\ieme{} problème de Hilbert, elle s'est rapidement imposée comme un outil incontournable pour la dynamique topologique sur les surfaces --- il s'est par ailleurs avéré que le théorème \ref{ThBrou} de translation de Brouwer donne une preuve du théorème de Poincaré-Birkhoff (voir \cite{MR1273787}).

L'idée de la théorie du forçage est d'exploiter le théorème \ref{TheoPatriceFeuillete} de \cite{PatriceFeuille2} qui associe à tout homéomorphisme de surface homotope à l'identité un feuilletage singulier sur la surface. On utilise alors une sorte de dualité entre l'action de l'homéomorphisme $f$ sur les points de la surface et sur les feuilles du feuilletage : comme résumé par le diagramme suivant, on part d'une propriété sur $f$, la traduit en termes de feuilletages, on en déduit l'existence de nouvelles orbites de l'action de $f$ sur le feuilletage, ce qui implique une certaine propriété de l'action de $f$ sur les points.

\begin{center}
\begin{tikzpicture}[scale=.8]
\node (A) at (0,0) {$S$};
\node (B) at (2,0) {$S$};
\node (C) at (0,2) {$\F$};
\node (D) at (2,2) {$\F'$};

\draw[->,>=latex,shorten >=3pt, shorten <=3pt] (A) to node[midway, above]{$f$} (B);
\draw[->,>=latex,shorten >=3pt, shorten <=3pt] (A) to (C);
\draw[->,>=latex,shorten >=3pt, shorten <=3pt] (C) to node[midway, above]{$f$} (D);
\draw[->,>=latex,shorten >=3pt, shorten <=3pt] (D) to (B);
\end{tikzpicture}
\end{center}

Le parti pris de cet exposé n'est pas de lister toutes les (innombrables) applications de cette théorie, mais plutôt d'aller calmement vers l'énoncé et la preuve de la proposition fondamentale \ref{PropFond} de forçage, avant d'en déduire quelques unes des conséquences les plus directes concernant les ensembles de rotation. Avant cela, j'énoncerai deux des corollaires de cette théorie qui me semblent parmi les plus emblématiques et ferai un détour didactique par un énoncé de forçage en dimension 1, qui motivera la définition de fer à cheval topologique.

\section{Quelques applications aux homéomorphismes de la sphère}

Les applications de la théorie du forçage sur les surfaces impressionnent par leur nombre et leur puissance : outre les théorèmes numérotés de A à M dans \cite{LCT1} et de A à L dans \cite{LCT2}, on peut citer les travaux \cite{conejeros2016}, \cite{conejeros2019} et \cite{Gabriel} se basant de manière cruciale sur cette théorie. Elles concernent à la fois la théorie de la rotation sur l'anneau, le tore et les surfaces de genre supérieur, mais aussi les homéomorphismes non errants de la sphère, les difféomorphismes hamiltoniens du tore, etc.

Plutôt que de faire la liste de toutes ces applications, je voudrais en présenter deux qui me semblent parmi les plus frappantes ; nous rentrerons plus dans le détail des applications aux homéomorphismes du tore dans la dernière partie de cet exposé. \'Etant donnée une surface $S$, on notera $\homeo_0(S)$ l'ensemble des homéomorphismes de $S$ isotopes à l'identité.

La première est un théorème de structure des ensembles non errants d'homéomorphismes de la sphère d'entropie nulle. Rappelons que l'\emph{ensemble non errant} d'un homéomorphisme $f$ d'un espace compact $M$ est défini par
\[\Omega(f) = \big\{x\in M \mid \forall U \text{ vois. de } x,\exists n>0 : f^n(U) \cap U \neq\emptyset\big\}.\]
Dans cet ensemble on peut sélectionner les points ayant un comportement asymptotique \og non trivial\fg{} : on pose\footnote{Pour tout point $x\in S$, on définit son \emph{$\omega$-limite} (resp. $\alpha$-limite) comme l'ensemble des points d'accumulation de l'orbite positive (resp. négative) de $x$ sous $f$.}
\[\Omega'(f) = \big\{z\in \Omega(f) \mid \alpha(z) \cup \omega(z) \not\subset  \fix(f)\big\}.\]
Notons que l'ensemble des points récurrents non fixes de $f$ est un $G_\delta$ dense de $\Omega'(f)$.

\cite{MR3033517} ont démontré un théorème de structure pour les difféomorphismes de la sphère préservant le volume, d'entropie nulle et de classe $C^\infty$. Leur preuve est essentiellement topologique mais utilise (entres autres) la théorie de Yomdin sur le taux de croissance de la longueur des arcs, qui requiert la régularité $C^\infty$. Ce théorème a été étendu successivement dans \cite{LCT1} aux homéomorphismes non errants (théorème M), et dans \cite{LCT2} (théorème G) à l'ensemble non errant d'un homéomorphisme.

\begin{theo}\label{ThSphere}
Soit $f\in \homeo_0(\Sp^2)$ un homéomorphisme de la sphère préservant l'orientation, et sans fer à cheval topologique\footnote{Cette notion sera définie au paragraphe suivant. Le lecteur connaissant le fer à cheval de Smale peut avoir cet exemple en tête ; mentionnons le fait qu'un homéomorphisme d'entropie nulle est sans fer à cheval topologique.}. Alors l'ensemble $\Omega'(f)$ est recouvert par une famille $(A_\alpha)_{\alpha\in\mathcal A}$ d'anneaux ouverts tels que pour tout $\alpha\in\mathcal A$,
\begin{enumerate}
\item $f|_{A_\alpha}$ est sans point fixe et isotope à $\id_{A_\alpha}$ ;
\item 
il existe un relevé de $f|_{A_\alpha}$ au revêtement universel de $A_\alpha$ dont l'ensemble de rotation est inclus dans $[0,1]$ ;
\item $A_\alpha$ est minimal pour les deux premières propriétés.
\end{enumerate}
De plus, si l'homéomorphisme $f$ est non errant (i.e. $\Omega(f) = \Sp^2$), alors les anneaux $A_\alpha$ sont deux-à-deux disjoints et leur union est dense dans $\Sp^2\setminus\fix(f)$.
\end{theo}

\begin{figure}
\begin{center}
\includegraphics[width=.6\linewidth]{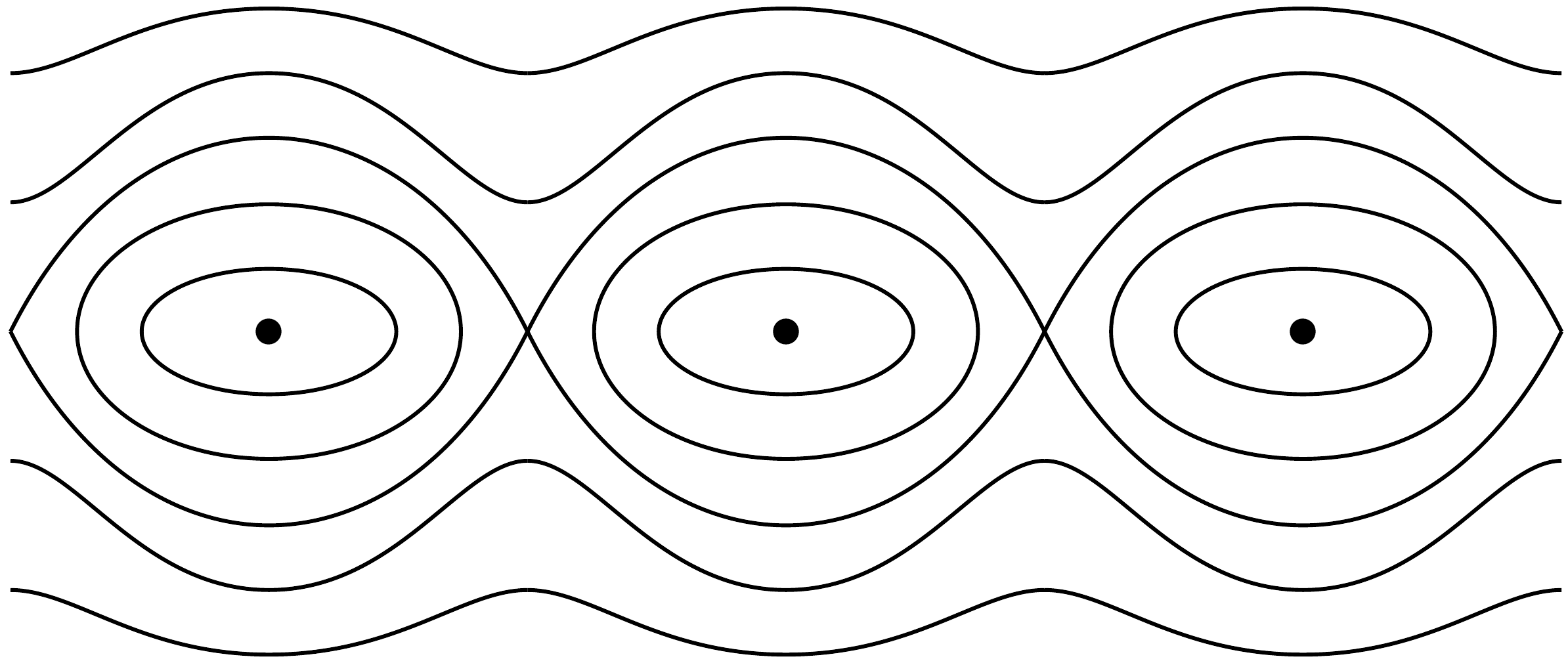}
\caption{\label{FigPorPhase}Portrait de phase du type pendule simple.}
\end{center}
\end{figure}

Remarquons tout de suite que cet énoncé a des liens très intimes avec le théorème \ref{ThPB} de Poincaré-Birkhoff, à travers la condition 2 qui exprime que les anneaux, sans points fixes, ne peuvent pas vérifier les conditions du théorème de Poincaré-Birkhoff.

Le théorème \ref{ThSphere} exprime que la dynamique d'un homéomorphisme de la sphère d'entropie nulle \og ressemble\fg\footnote{Attention tout de même, la situation peut être bien plus complexe que sur la figure, en particulier il n'y a pas de raison pour que l'homéomorphisme possède des courbes invariantes.} à celle du portrait de phase du pendule simple (figure \ref{FigPorPhase}). Notons qu'il est possible que l'ensemble $\mathcal A$ soit vide (et donc qu'il n'y ait aucun anneau), par exemple pour une dynamique du type nord-sud.

Le résultat est plus faible dans le cas général d'un ensemble errant non vide (on ne sait pas si les anneaux sont disjoints) : de l'aveu même des auteurs il reste encore du travail pour obtenir une description complète des homéomorphismes de la sphère d'entropie nulle. Néanmoins, la preuve de ce dernier théorème est longue et délicate dans le cas général (elle occupe une part importante de \cite{LCT2}) : nous ne l'aborderons pas ici.
\medskip

La seconde application concerne les ensembles transitifs. On dit qu'un ensemble $\Lambda\subset \Sp^2$ est \emph {transitif} pour $f$ s'il existe une orbite de $f|_{\Lambda}$ dense dans $\Lambda$. C'est une propriété plus forte que le fait d'être non errant, ce qui permet d'avoir une description plus précise dans le cas des homéomorphismes d'entropie nulle. Commençons par deux définitions.

\begin{defi}
Nous dirons que $f : \Sp^2\to\Sp^2$ est \emph{topologiquement infiniment renormalisable} pour un ensemble fermé invariant non vide $\Lambda$ s'il existe une suite croissante d'entiers $(q_n)_{n\ge 1}$ et une suite décroissante $(D_n)_{n\ge 1}$ de disques topologiques ouverts tels que :
\begin{enumerate}
\item $q_n$ divise $q_{n+1}$ ;
\item $f^{q_n}(D_n) = D_n$ ;
\item les disques $f^i(D_n)$, pour $0\le i <q_n$, sont deux-à-deux disjoints ;
\item $\Lambda \subset \bigcup_{0\le i <q_n} f^i(D_n)$.
\end{enumerate}
\end{defi}

Notons que si $f$ est topologiquement infiniment renormalisable pour $\Lambda$, alors $f|_{\Lambda}$ est semi-conjuguée à un odomètre (voir les figures \ref{FigOdo} et \ref{FigOdo2}).

\begin{figure}
\begin{minipage}[c]{0.49\textwidth}
\begin{center}
\begin{tikzpicture}[scale=2.5]
\draw[->] (-.2,0) -- (1.2,0);
\draw[->] (0,-.2) -- (0,1.2);
\draw (1,0) node{$|$};
\draw (1,-.08) node[below]{$1$};
\draw (0,1) node{$-$};
\draw (-.08,1) node[left]{$1$};
\foreach \x in {-5,...,0}{
\draw[very thick](1-2^\x,0.5*2^\x)  -- (1-0.5*2^\x,2^\x);
}
\end{tikzpicture}
\caption{Une réalisation de l'odomètre par une application de l'intervalle.}\label{FigOdo}
\end{center}
\end{minipage}
\hfill
\begin{minipage}[c]{0.49\textwidth}
\begin{center}
\includegraphics[width=.7\linewidth]{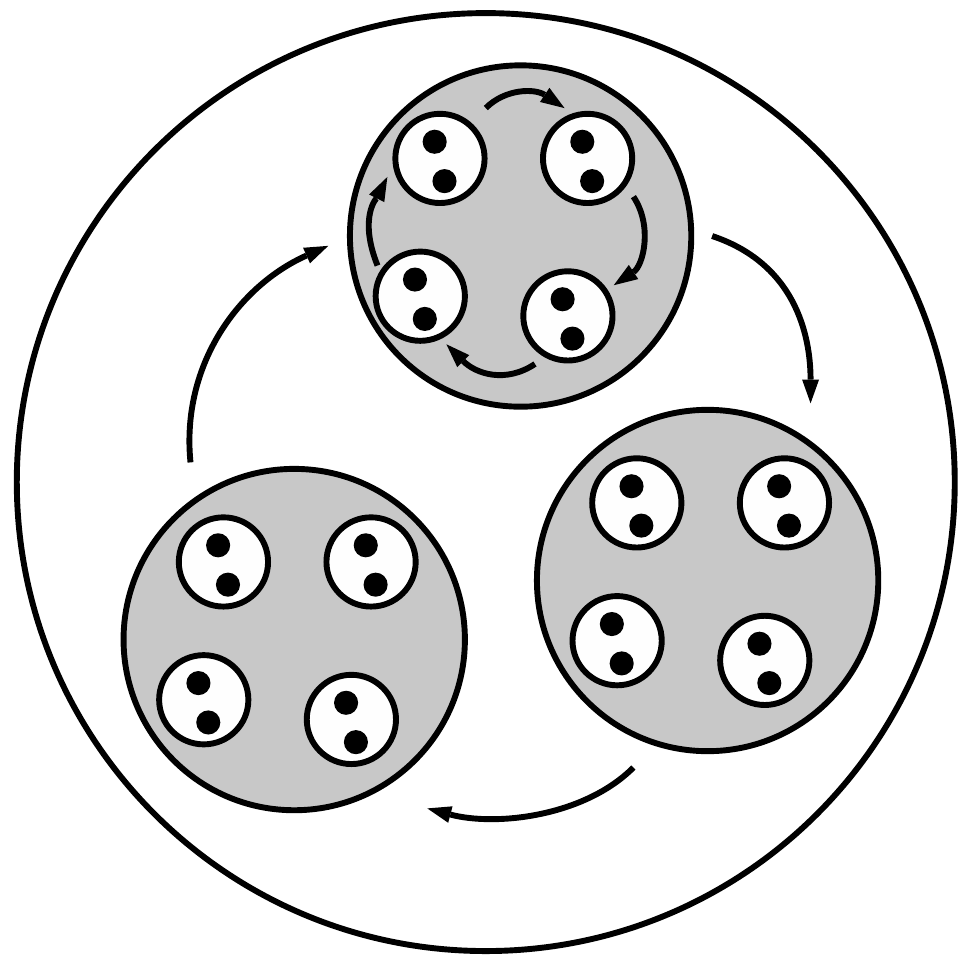}
\caption{\label{FigOdo2}Un exemple d'application renormalisable.}
\end{center}
\end{minipage}
\end{figure}

\begin{defi}
Nous dirons qu'un ensemble fermé invariant non vide $\Lambda$ est \emph{de type irrationnel} s'il est l'union disjointe d'un nombre fini d'ensembles fermés $\Lambda_1,\dots,\Lambda_q$ qui sont permutés cycliquement par $f$, et s'il existe un nombre irrationnel $\rho\in \R$ tel que pour tout $n\ge 1$, si $z_0 \neq z_1$ sont deux points fixes de $f^{qn}$, alors il existe un relevé $\check g$ de $f^{qn}|_{\Sp^2\setminus\{z_0,z_1\}}$ au revêtement universel de $\Sp^2\setminus\{z_0,z_1\}$ tel que $\check g|_{\check\Lambda}$ possède un unique nombre  de rotation, qui est égal soit à $n\rho$, soit à 0.
\end{defi}

Bien sûr, l'exemple le plus simple d'ensemble de type irrationnel est un cercle où la dynamique est de nombre de rotation irrationnel, mais il est possible de construire d'autres exemples plus complexes, par exemple par la méthode d'Anosov-Katok qui donne des pseudo-rotations irrationnelles transitives sur la sphère.

Le résultat suivant est la proposition K de \cite{LCT2}.

\begin{theo}
Soit $f\in \homeo_0(\Sp^2)$ un homéomorphisme de la sphère préservant l'orientation, sans fer à cheval topologique, et $\Lambda$ un ensemble invariant transitif et non vide. Alors
\begin{itemize}
\item soit $\Lambda$ est une orbite périodique ;
\item soit $f$ est topologiquement infiniment renormalisable pour $\Lambda$ ;
\item soit $\Lambda$ est de type irrationnel.
\end{itemize}
\end{theo}

\section{Un cas d'école : la dimension 1}

Il me semble éclairant de commencer par revenir aux fondamentaux et expliquer un exemple de théorème de forçage d'orbites en dimension 1 : le théorème de \cite{Sarko}. Sa preuve, bien que très simple, sera éclairante pour la suite de notre exposition.

\begin{theo}\label{ThSharko}
Soit $f : \R \to\R$ une application continue, qui possède un point périodique de période 3\footnote{C'est-à-dire un point $x\in\R$ tel que $f^3(x) = x$ mais $f^k(x)\neq x$ pour tout $1\le k< 3$.}. Alors $f$ possède des points périodiques de n'importe quelle période entière.
\end{theo}

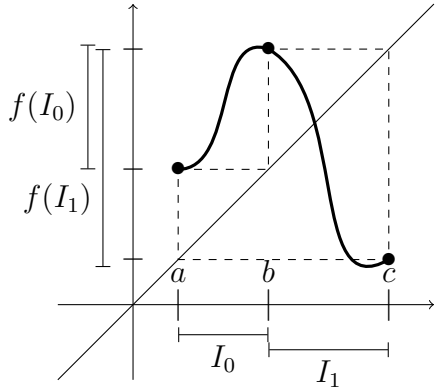
\begin{figure}
\begin{center}
\begin{tikzpicture}[scale=2]
\draw[->] (-.5,0) -- (2,0);
\draw[->] (0,-.5) -- (0,2);
\draw (.3,0) node{$|$};
\draw (.3,.08) node[above]{$a$};
\draw (.9,0) node{$|$};
\draw (.9,.08) node[above]{$b$};
\draw (1.7,0) node{$|$};
\draw (1.7,.08) node[above]{$c$};
\draw (0,.3) node{$-$};
\draw (0,.9) node{$-$};
\draw (0,1.7) node{$-$};
\draw[very thick] (.3,.9) node{$\bullet$} to[out=0, in=160] (.9,1.7) node{$\bullet$} to[out=-30, in=-150] (1.7,.3) node{$\bullet$};
\draw (-.5,-.5) -- (2,2);
\draw[dashed] (.3,.9) -- (.9,.9) -- (.9,1.7) -- (1.7,1.7) -- (1.7,.3) -- (.3,.3) -- cycle;
\draw[<->,>=|] (.3,-.2) --node[below]{$I_0$} (.9,-.2);
\draw[<->,>=|] (.9,-.3) --node[below]{$I_1$} (1.7,-.3);
\draw[<->,>=|] (-.3,.9) --node[left]{$f(I_0)$} (-.3,1.73);
\draw[<->,>=|] (-.2,.25) -- (-.2,1.7);
\draw(-.2,.7) node[left]{$f(I_1)$};
\end{tikzpicture}
\caption{Preuve du théorème \ref{ThSharko} de Sharkovsky.}\label{FigSharko}
\end{center}
\end{figure}

\begin{proof}
Soit $a$ un point périodique de période 3 pour $f$. On note $b = f(a)$ et $c = f^2(a)$ (et donc $f(c) = a$). Quitte à permuter circulairement $a$, $b$ et $c$, et à changer $f$ en $x\mapsto -f(-x)$, on peut supposer que $a<b<c$. Posant $I_0 = ]a,b[$ et $I_1 = ]b,c[$, le théorème des valeurs intermédiaires assure que $f(I_0)\supset ]b,c[ = I_1$ et $f(I_1) \supset ]a,c[ \supset I_0\cup I_1$ (voir la figure \ref{FigSharko}). Autrement dit, définissant la relation $\to_n$ sur les intervalles de $[a,c]$ par $I\to_n J$ si $f^n(I)\supset J$, on a $I_0 \to_1 I_1$, $I_1\to_1 I_0$ et $I_1\to_1 I_1$. On peut représenter ces relations par le graphe suivant, où les flèches correspondent aux relations $\to_1$ :
\begin{center}
\begin{tikzpicture}[scale=.9]
\node[draw,circle,minimum height=1.2cm, fill=gray!20!white] (A) at (3,0) {$I_1$};
\node[draw,circle,minimum height=1.2cm, fill=gray!20!white] (B) at (0,0) {$I_0$};
\draw[->,>=latex,shorten >=3pt, shorten <=3pt] (B) to[bend left] (A);
\draw[->,>=latex,shorten >=3pt, shorten <=3pt] (A) to[bend left] (B);
\draw[->,>=latex,shorten >=3pt, shorten <=3pt] (A) to[loop right,looseness=8,min distance=10mm] (A);
\end{tikzpicture}
\end{center}

Cette relation $\to_n$ vérifie les propriétés
\begin{itemize}
\item de transitivité : si $I\to_n J$ et $J\to_m K$, alors $I\to_{m+n} K$ ;
\item de point fixe : par le théorème des valeurs intermédiaires, si $I\to_n I$, alors il existe $x\in \overline I$ fixe par $f^n$.
\end{itemize}
Par conséquent, si on choisit une trajectoire positive $w_0 w_1 w_2\dots\in\{0,1\}^\N$ dans le graphe ci-dessus --- ce qui signifie que pour tout $j\ge 0$, on a $I_{w_j} \to_1 I_{w_{j+1}}$ ---, alors il existe un point $x\in\R$ tel que $f^j(x) \in I_{w_j}$ pour tout $j\ge 0$. Si de plus la trajectoire est périodique de période $T$ qui n'est pas un multiple de 3, alors on peut montrer facilement que $x$ peut être pris périodique de période $T$.
\end{proof}

Dans la preuve, on vient de définir un ensemble de mots \emph{admissibles}, définis par un nombre fini de règles de transition locales. Celles-ci peuvent être lues sur la matrice d'adjacence 
\[A = \begin{pmatrix}
 1 & 1 \\ 1 & 0
\end{pmatrix}\]
du graphe, auquel est aussi naturellement associée une chaîne de Markov. L'application de la dynamique $f$ correspond à l'application décalage \og\emph{shift}\fg{} sur l'espace des mots infinis :
\[\begin{array}{rcl}
\sigma : \{0,1\}^\N & \longrightarrow & \{0,1\}^\N \\
 w_0w_1w_2\dots & \longmapsto & w_1w_2w_3\dots
\end{array}\]
Formellement, si on pose $\Sigma$ l'ensemble des mots admissibles, et $\Lambda = \overline{\bigcap_{n\in\N} f^{-n}\big(I_0\cup I_1\big)}$, alors on a
\begin{center}
\begin{tikzpicture}[scale=.9]
\node (A) at (0,0) {$\Sigma$};
\node (B) at (2,0) {$\Sigma$};
\node (C) at (0,2) {$\Lambda$};
\node (D) at (2,2) {$\Lambda$};

\draw[->,>=latex,shorten >=3pt, shorten <=3pt] (A) to node[midway, above]{$\sigma$} (B);
\draw[->,>=latex,shorten >=3pt, shorten <=3pt] (C) to node[midway, left]{$h$} (A);
\draw[->,>=latex,shorten >=3pt, shorten <=3pt] (C) to node[midway, above]{$f$} (D);
\draw[->,>=latex,shorten >=3pt, shorten <=3pt] (D) to node[midway, right]{$h$} (B);
\end{tikzpicture}
\end{center}
où $h$ est défini par $f^i(x) \in I_{h(x)_i}$ (à une indétermination près, en le point 3 périodique). Par ce qu'on vient de voir, l'application $h$ est surjective et continue, autrement dit $f_{|\Lambda}$ est une \emph{extension} du --- ou est semi-conjugué au --- décalage de Fibonacci $\sigma_{|\Sigma}$. De plus, à chaque point périodique de $\sigma$  correspond un point périodique de même période dans $\overline\Lambda$.

Derrière cette semi-conjugaison se cache l'idée d'un codage de la dynamique, dont les origines remontent (au moins) à Hadamard et sa preuve de l'ergodicité du flot géodésique en 1898.
\medskip

La configuration du théorème \ref{ThSharko} de Sharkovsky est l'exemple type de \emph{fer à cheval topologique} comme défini\footnote{À quelques détails près : la définition qui vient est adaptée aux applications inversibles et aux réelles partitions de Markov et pas seulement aux partitions topologiques.} dans \cite{LCT2} (voir \cite{MR660643} pour une étude plus complète des fers à cheval en dimension 1). La définition qui suit, bien qu'un peu longue, est un critère très général sous lequel la dynamique d'un décalage se retrouve au sein de la dynamique de l'application considérée.

\begin{defi}\label{DefCheval}
Soit $X$ un espace localement compact, et $f$ un homéomorphisme de $X$. Un \emph{fer à cheval topologique} est un sous-ensemble compact $Y$ de $X$ qui est invariant par un itéré $f^r$ de $f$, tel qu'il existe un ensemble compact $Z$, une application continue $g : Z\to Z$, un entier $q\ge 2$ et des applications continues surjectives $h_1 : Z \to Y$ et $h_2 : Z \to \{1,\dots, q\}^\Z$ telles que le diagramme suivant commute
\begin{center}
\begin{tikzpicture}[scale=1.3]
\node (A) at (0,0) {$Y$};
\node (B) at (2.5,0) {$Y$};
\node (C) at (0,1.5) {$Z$};
\node (D) at (2.5,1.5) {$Z$};
\node (E) at (0,3) {$\{1,\dots, q\}^\Z$};
\node (F) at (2.5,3) {$\{1,\dots, q\}^\Z$};

\draw[->,>=latex,shorten >=3pt, shorten <=3pt] (A) to node[midway, above]{$f$} (B);
\draw[->,>=latex,shorten >=3pt, shorten <=3pt] (C) to node[midway, left]{$h_1$} (A);
\draw[->,>=latex,shorten >=3pt, shorten <=3pt] (C) to node[midway, above]{$g$} (D);
\draw[->,>=latex,shorten >=3pt, shorten <=3pt] (D) to node[midway, right]{$h_1$} (B);
\draw[->,>=latex,shorten >=3pt, shorten <=3pt] (E) to node[midway, above]{$\sigma$} (F);
\draw[->,>=latex,shorten >=3pt, shorten <=3pt] (C) to node[midway, left]{$h_2$} (E);
\draw[->,>=latex,shorten >=3pt, shorten <=3pt] (D) to node[midway, right]{$h_2$} (F);
\end{tikzpicture}
\end{center}
et que 
\begin{itemize}
\item les fibres de $h_1$ soient toutes finies ;
\item la préimage de toute suite $p$-périodique de $\sigma$ contient une suite $p$-périodique de $g$.
\end{itemize}
\end{defi}

En particulier, l'existence d'un fer à cheval topologique implique la positivité de l'entropie topologique\footnote{Il existe une réciproque à ce résultat en régularité $C^{1+\alpha}$, démontrée par \cite{MR573822}, qui est fausse en régularité $C^0$, par une constructions due à \cite{MR616561}.} (un nombre qui mesure le caractère chaotique du système) et la croissance exponentielle avec $p$ du nombre d'orbites périodiques de période $p$.
\medskip

En résumé, l'idée d'une théorie du forçage est de détecter des mécanismes impliquant l'existence de nouvelles orbites dans notre dynamique, voire si c'est possible l'existence de fers à cheval topologiques. Dans le cas des surfaces, l'exemple emblématique de fer à cheval topologique est le fer à cheval de Smale (voir la figure \ref{FigSmale}), pour lequel l'ensemble $Y$ de la définition \ref{DefCheval} est $\bigcap_{n\in\Z} f^n(R)$. Celui-ci est semi-conjugué au décalage complet sur un alphabet à deux éléments, et cela vient du fait que l'ensemble image $f(R)$ possède deux \emph{intersections markoviennes} avec le rectangle $R$. Cette notion remplace celle (bien plus rudimentaire) de recouvrement en dimension 1, et là où on appliquait un théorème des valeurs intermédiaires pour trouver un point périodique, il faut ici utiliser un théorème d'indice (de Lefschetz ou de Conley). Dans cet exposé, nous ne rentrerons pas dans le détail de la preuve d'existence de fer à cheval topologique, nous nous contenterons d'observer l'apparition de configurations géométriquement similaires à celle du fer à cheval de Smale.

\begin{figure}
\begin{center}
\begin{tikzpicture}[scale=.4, rotate=90]
\draw (0,0) rectangle (6,6);
\fill[opacity=.15] (0,0) rectangle (6,6);
\draw (1,-1) -- (1,7) arc (180:0:2) -- (5,-1) -- (4,-1) -- (4,7) arc (0:180:1) -- (2,-1) -- cycle;
\fill[opacity=.15] (1,-1) -- (1,7) arc (180:0:2) -- (5,-1) -- (4,-1) -- (4,7) arc (0:180:1) -- (2,-1) -- cycle;
\draw (3,3) node{$R$};
\draw (3,9) node[left]{$f(R)$};
\end{tikzpicture}
\caption{Le fer à cheval de Smale}\label{FigSmale}
\end{center}
\end{figure}
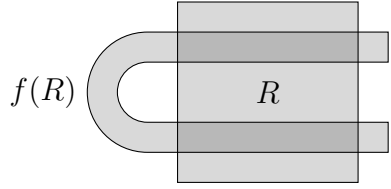

\section{Prérequis : théorie de Brouwer et feuilletages}

Dans toute la suite le plan $\R^2$ sera orienté. On appellera \emph{chemin} toute application continue d'un intervalle dans une surface, et \emph{droite} tout plongement topologique propre de $\R$ dans $\R^2$. Le complémentaire de toute droite $\Delta$ possède deux composantes connexes (via le théorème de Jordan appliqué à la compactification $\Sp^2$ de $\R^2$). Le choix d'une (co-)orientation sur $\Delta$ permet de lever l'ambiguïté sur ces composantes connexes, qui seront désignées par la \emph{gauche} $L(\Delta)$ et la \emph{droite} $R(\Delta)$ de $\Delta$. 

\begin{defi}
Une \emph{droite de Brouwer} pour un homéomorphisme $f$ du plan est une droite topologique orientée $\Delta$ proprement plongée dans $\R^2$, telle que $\overline{f(R(\Delta))} \subset R(\Delta)$.
\end{defi}

Lorsque $f$ préserve l'orientation, cela implique que pour tous $m<n\in\Z$, la droite $f^n(\Delta)$ est située à droite de $f^m(\Delta)$. Le théorème de translation plane de \cite{MR1511684} énonce que par chaque point passe une droite de Brouwer.

\begin{theo}[Brouwer]\label{ThBrou}
Soit $f$ un homéomorphisme du plan préservant l'orientation et sans point fixe. Alors en tout point du plan passe une droite de Brouwer pour $f$.
\end{theo}

Donnons tout de suite l'application directe la plus frappante de ce théorème, qui peut être vue comme un premier résultat de forçage dans le plan\footnote{Je triche un peu en appelant l'énoncé qui suit corollaire, puisqu'il est un résultat intermédiaire des preuves du théorème de Brouwer.}.

\begin{coro}
Tout homéomorphisme du plan préservant l'orientation et ayant un point périodique possède aussi un point fixe.
\end{coro}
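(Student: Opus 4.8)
The plan is to derive the corollary from Brouwer's plane translation theorem (Théorème~\ref{ThBrou}) by contradiction, using the characteristic property of a Brouwer line, namely that its forward iterates are pushed strictly to one side. Suppose $f$ is an orientation-preserving homeomorphism of $\R^2$ with a periodic point $z$ of least period $k\ge 2$ but with $\fix(f)=\emptyset$. Then $f$ has no fixed point, so Théorème~\ref{ThBrou} applies: there is a Brouwer line $\Delta$ through $z$, i.e.\ an oriented properly embedded line with $\overline{f(R(\Delta))}\subset R(\Delta)$.

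First I would record the monotonicity consequence already noted in the excerpt: since $f$ preserves orientation, for all integers $m<n$ the line $f^n(\Delta)$ lies in $R(f^m(\Delta))$, and in fact $\overline{f^n(R(\Delta))}\subset f^m(R(\Delta))$. Iterating, $\overline{f^{n}(R(\Delta))}\subset R(\Delta)$ for every $n\ge 1$, and symmetrically $R(\Delta)\subset f^{-n}(R(\Delta))$ for $n\ge 1$; so the sets $f^{n}(R(\Delta))$ are strictly nested as $n$ ranges over $\Z$. Next I would locate the orbit of $z$ relative to $\Delta$. The point $z$ lies on $\Delta$, hence $z\notin R(\Delta)$ and $z\notin f(R(\Delta))$; but $f(z)\in f(\Delta)\subset \overline{f(R(\Delta))}\setminus$, more usefully $f(z)$ lies on the boundary $f(\Delta)$ of $f(R(\Delta))$, which sits strictly inside $R(\Delta)$, so $f(z)\in R(\Delta)$ once we check $f(\Delta)\subset R(\Delta)$ — this holds because $f(\Delta)=\partial f(R(\Delta))$ and $\overline{f(R(\Delta))}\subset R(\Delta)$ forces $f(\Delta)\subset R(\Delta)$. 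Then, applying $f^{k-1}$ and using the nesting, $f^{k}(z)\in f^{k-1}(R(\Delta))\subset R(\Delta)$. But $f^{k}(z)=z\notin R(\Delta)$, a contradiction.

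The one genuinely delicate point — the "main obstacle" — is the bookkeeping with closures and boundaries: one must be careful that $\Delta$ being properly embedded makes $R(\Delta)$ and $L(\Delta)$ genuinely open with common boundary $\Delta$, that $f(R(\Delta))$ is again one of the two sides of the properly embedded line $f(\Delta)$ (which it is, since $f$ is an orientation-preserving homeomorphism of $\R^2$, hence of $\Sp^2$, so it carries the Jordan decomposition to the Jordan decomposition), and that the strict inclusion $\overline{f(R(\Delta))}\subset R(\Delta)$ really does place all of $f(\Delta)$ inside $R(\Delta)$ and not merely on its closure. Once these topological normalizations are in hand the contradiction is immediate, and in fact the argument shows a bit more: every point of $\Delta$ has the property that its $f$-orbit, after the first step, never returns to $\Delta$, so no point of a Brouwer line can be periodic; since by Théorème~\ref{ThBrou} (applied to the hypothetical fixed-point-free $f$) \emph{every} point lies on some Brouwer line, $f$ can have no periodic point at all, which is the desired contradiction.
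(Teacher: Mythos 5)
Your proof is correct and takes essentially the same route as the paper: apply Théorème~\ref{ThBrou} to the hypothetical fixed-point-free $f$ to get a Brouwer line through the would-be periodic point, then use the nesting $\overline{f^{n}(R(\Delta))}\subset R(\Delta)$ to see the line is free, so no point on it can be periodic. The paper compresses this into one line ("la droite est libre, donc $p$ n'est pas périodique"); you simply spell out the closure/boundary bookkeeping that justifies it.
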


\begin{proof}
Il suffit de montrer qu'un homéomorphisme préservant l'orientation et sans point fixe est aussi sans point périodique. Soit $p$ un point du plan, le théorème de Brouwer \ref{ThBrou} assure l'existence d'une droite de Brouwer passant par $p$, en particulier cette droite est libre (disjointe de chacun de ses itérés) et donc $p$ n'est pas périodique.
\end{proof}

Toutes les preuves du théorème de Brouwer sont assez longues et délicates (mais se terminent par un calcul d'indice de courbe). Vu la puissance de ce résultat, il est naturel de chercher à l'améliorer, dans au moins deux directions différentes.
\begin{itemize}
\item Il n'y a aucune raison pour que deux droites de Brouwer issues de deux points distincts soient disjointes. Est-il possible de faire en sorte que ce soit le cas, de manière à obtenir un feuilletage du plan par des droites de Brouwer ?
\item Est-il possible d'obtenir un tel résultat pour des homéomorphismes de surfaces compactes ? Autrement dit, étant donné un homéomorphisme d'une surface compacte, à quelles conditions existe-t-il un feuilletage de la surface qui se relève, au revêtement universel, en un feuilletage du plan en droites de Brouwer pour le relevé de l'homéomorphisme ?
\end{itemize}

Ces deux questions ont été résolues successivement par \cite{PatriceFeuille1} et \cite{PatriceFeuille2} (principalement, voir aussi l'exposé \cite{MR2275671}), pour aboutir à un théorème de Brouwer feuilleté équivariant.

\begin{theo}\label{TheoPatriceFeuillete}
Soit $G$ un groupe discret d'homéomorphismes du plan préservant
l'orientation, agissant librement et proprement. Soit $f$ un homéomorphisme du plan sans point fixe, préservant l'orientation et commutant avec les éléments de $G$. Alors il existe un feuilletage topologique orienté du plan en droites de Brouwer pour $f$, et invariant sous l'action de $G$.
\end{theo}

Dans la pratique, on appliquera ce théorème uniquement via son corollaire suivant, qui utilise la notion de chemin transverse.

\begin{defi}
Soit $\F$ un feuilletage orienté d'une surface $S$ et $\gamma : [0,1]\to S$ un chemin. Pour $x\in S$, on note $\phi_x$ la feuille de $\F$ passant par $x$. On dit que $\gamma$ est \emph{positivement transverse} à $\F$ (qu'on abrégera en $\F$-transverse) si pour tout $t\in[0,1]$, on a en relevant au revêtement universel\footnote{Ce revêtement universel est toujours homéomorphe au plan, puisqu'il n'existe pas de feuilletage non singulier sur la sphère.} de $S$
\[\{\wt\gamma(u)\mid u\in [0,t[\}\subset L(\wt \phi_{\wt \gamma(t)}) \qquad \text{et} \qquad \{\wt \gamma(u)\mid u\in ]t,1]\}\subset R(\wt \phi_{\wt \gamma(t)}).\]
\end{defi}

Autrement dit, le chemin $\gamma$ traverse localement toutes les feuilles qu'il rencontre de la gauche vers la droite.

\begin{coro}\label{CoroPatriceFeuillete}
Soit $S$ une surface et $I=(f_t)_{t\in[0,1]}$ une isotopie dans $S$ joignant l'identité à un homéomorphisme $f$ (si bien que pour tout $z\in S$, l'arc $I(z): t \mapsto f_t(z)$ joint $z$ à $f(z)$). On suppose que $I$ n'a pas de point fixe contractile, c'est-à-dire de point fixe $z$ tel que $I(z)$ soit un lacet homotope à $0$. Alors il existe un feuilletage topologique orienté $\F$ sur $S$ et pour tout $z\in S$, un chemin positivement transverse à $\F$ joignant $z$ à $f(z)$ qui est homotope, à extrémités fixées, à l'arc $I(z)$.
\end{coro}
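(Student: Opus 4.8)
The idea is to reduce the statement to the equivariant foliated Brouwer theorem \ref{TheoPatriceFeuillete} applied on the universal cover of $S$. First I would lift everything: let $\pi : \wt S \to S$ be the universal cover, let $G$ be the group of deck transformations, and let $\wt I = (\wt f_t)_{t\in[0,1]}$ be the lift of the isotopy $I$ starting at $\id_{\wt S}$, with time-one map $\wt f$. Since $\wt S$ is simply connected and $I$ has no contractible fixed point, the map $\wt f$ has no fixed point at all: a fixed point $\wt z$ of $\wt f$ would project to a fixed point $z$ of $f$ for which the loop $I(z)$ lifts to the arc $\wt I(\wt z)$, a path from $\wt z$ to $\wt z$, hence a loop, hence $I(z)$ is homotopic to $0$, contradicting the hypothesis. (Here I am implicitly using that $\wt S$ is homeomorphic to the plane; this is fine since the only closed surface whose universal cover is not the plane is $\Sp^2$, which carries no nonsingular foliation — and for $\Sp^2$ one argues separately or the statement is vacuous in the relevant range.) The map $\wt f$ preserves orientation and commutes with every element of $G$, and $G$ acts freely and properly discontinuously by orientation-preserving homeomorphisms of the plane, so Theorem \ref{TheoPatriceFeuillete} furnishes a $G$-invariant oriented topological foliation $\wt\F$ of $\wt S$ by Brouwer lines for $\wt f$. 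Being $G$-invariant, $\wt\F$ descends to an oriented topological foliation $\F$ of $S$.

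It remains to produce, for each $z\in S$, a positively $\F$-transverse path from $z$ to $f(z)$ homotopic rel endpoints to $I(z)$. Equivalently, by the very definition of positive transversality (which is phrased upstairs), it suffices to find for each $\wt z\in\wt S$ a path from $\wt z$ to $\wt f(\wt z)$ that is positively transverse to $\wt\F$ and homotopic rel endpoints to $\wt I(\wt z)$; projecting down and using that $\wt I(\wt z)$ projects to $I(z)$ gives the claim, and $G$-equivariance of the construction guarantees consistency. So the whole matter is local-to-global on the plane: given an oriented foliation of $\R^2$ by Brouwer lines for a fixed-point-free orientation-preserving homeomorphism $\wt f$, and a point $\wt z$, connect $\wt z$ to $\wt f(\wt z)$ by a transverse arc in the correct homotopy class.

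The heart of the argument — and the step I expect to be the main obstacle — is this planar transversality statement. The key geometric input is that $\wt z$ and $\wt f(\wt z)$ lie on leaves $\phi$ and $\phi'$ with $\phi' \subset R(\phi)$ strictly (a Brouwer line is disjoint from its image, and $\overline{\wt f(R(\phi))}\subset R(\phi)$ forces $\wt f(\wt z)$ strictly to the right of $\phi$), and symmetrically $\wt z$ lies strictly to the left of $\phi'$. One then builds the transverse arc by a compactness/chaining argument: cover a path from $\wt z$ to $\wt f(\wt z)$ by finitely many flow boxes of $\wt\F$, inside each flow box one can push the path to cross leaves strictly from left to right, and one concatenates these local corrections, smoothing the junctions so that transversality is preserved at the boundaries of the boxes. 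The only thing to check is that the resulting transverse arc stays in the prescribed homotopy class: since $\R^2$ is simply connected, any two arcs with the same endpoints are homotopic rel endpoints, so in fact there is nothing to check upstairs — the homotopy class is automatic, and the content of the statement is purely the existence of a transverse representative. Consequently the genuine difficulty is entirely contained in the elementary but careful verification that a path joining two points on opposite sides of their respective leaves can always be straightened into an $\wt\F$-transverse path; this is a standard lemma in the theory of transverse trajectories for Brouwer foliations, and I would invoke it (or reprove it by the flow-box chaining just sketched) to conclude.
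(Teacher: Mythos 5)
Your reduction to the plane is essentially the paper's: lift to the universal cover, observe that the absence of contractible fixed points makes the lifted time-one map fixed-point free, apply Theorem \ref{TheoPatriceFeuillete} to this lift together with the deck group, and push the equivariant foliation back down to $S$; your remark that the homotopy condition downstairs is automatic once the target point upstairs is the correct lift $\wt f(\wt z)$ is also right. The genuine gap is in the planar step that you yourself identify as the heart of the matter. The lemma you propose to invoke --- \emph{two points lying on the correct sides of each other's leaves can always be joined by a positively transverse path} --- is false for a general oriented foliation of the plane: the leaf space of such a foliation is a possibly branching (non-Hausdorff) one-manifold, and at each branching there appear leaves $\phi$ with $z_0\in\overline{R(\phi)}$ whose closed left side $\overline{L(\phi)}$ is an absolute barrier for every positively transverse path issued from $z_0$, even for target points that do lie in $R(\phi_{z_0})$ and have $z_0$ on the left of their own leaf. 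Your flow-box chaining cannot repair this: if the reference arc crosses some leaf from right to left, no modification supported in a single flow box yields a transverse arc with the same endpoints of that box --- the correction would have to change the set of leaves met, which is a global and not a local issue. So the relative position of $\phi_{\wt z}$ and $\phi_{\wt f(\wt z)}$ alone does not suffice, and nothing in your sketch uses the Brouwer property of the other leaves.

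That property is exactly what the paper's Lemme \ref{LemExisTraj} exploits. Let $W$ be the set of points reachable from $z_0$ by a nontrivial positively transverse arc; it is open and its frontier consists of $\phi_{z_0}$ together with leaves $\phi$ satisfying $W\subset R(\phi)$ (these are the barrier leaves created by branching). If $f(z_0)$, which lies in $R(\phi_{z_0})$, were not in $W$, it would lie in $\overline{L(\phi)}$ for one of these frontier leaves; but $\phi$ is a Brouwer line, so $f^{-1}\bigl(\overline{L(\phi)}\bigr)\subset L(\phi)$, whence $z_0\in L(\phi)$, contradicting $z_0\in\overline{W}\subset\overline{R(\phi)}$. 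You need this argument (or an equivalent one) to close the proof; as written, your key step rests on an incorrect general-position statement.
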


On notera $I_\F(z)$ un chemin $\F$-transverse homotope à extrémités fixes à $I(z)$ (dont on déduit la définition des chemins $I^n_\F(z)$).

Ce corollaire se déduit du théorème feuilleté facilement ; nous en donnons une preuve, basée sur le lemme suivant.

\begin{lemm}\label{LemExisTraj}
Sous les hypothèses du théorème \ref{TheoPatriceFeuillete}, pour tout point $z_0\in\R^2$, il existe un arc positivement transverse à $\F$ qui joint $z_0$ à $f(z_0)$.
\end{lemm}

\begin{figure}
\begin{center}
\begin{tikzpicture}[scale=.8]
\fill[opacity=.1] (0,2) -- (0,-2) -- (2,-2) to[in=180, out=90] (3,-.5) -- (3,1) to[out=180, in=-90] (1,2) -- cycle;
\draw (0,2) -- (0,-2) node[below]{$\phi_0$};
\draw (0,0) node {$\times$} node[left]{$z_0$};
\draw (1,2) to[in=180, out=-90] (3,1)node[right]{$\phi_1$};
\draw (2,-2) to[in=180, out=90] (3,-.5)node[right]{$\phi_2$};
\draw (1,0) node{$W$};
\draw[->] (-.3,-.5) -- (.2,-.5);
\draw[->] (1.5,1.5) -- (1.3,1);
\draw[->] (2.3,-1.5) -- (1.9,-1.3);
\end{tikzpicture}
\caption{Preuve du lemme \ref{LemExisTraj}. Les feuilles sont co-orientées. Le lecteur notera que les conventions d'orientation des feuilles seront parfois différentes de celles utilisées dans \cite{LCT1}.\label{FigLemExisTraj}}
\end{center}
\end{figure}
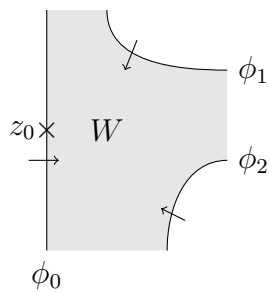

\begin{proof}
Notons $W$ l'ensemble des points de $\R^2$ pouvant être joints à $z_0$ par un arc positivement transverse à $\F$ (et non trivial). Alors $W$ est un ensemble ouvert, dont la frontière est constituée de la feuille $\phi_0$ passant par $z_0$ ainsi que d'éventuelles autres feuilles $\phi$ vérifiant $W\subset R(\phi)$ (voir la figure \ref{FigLemExisTraj}). Si $f(z_0)$, qui est à droite de $\phi_0$, n'appartenait pas à $W$, alors il appartiendrait à l'adhérence de la gauche d'une de ces feuilles $\phi$, ce qui est impossible puisque ces ensembles sont invariants par $f^{-1}$.
\end{proof}

\begin{proof}
Le corollaire se déduit immédiatement du lemme par passage au revêtement universel de $S\setminus \fix(f)$. En effet, ce revêtement universel ne peut être la sphère vue l'hypothèse sur l'absence de point fixe contractile, et est donc homéomorphe au plan $\R^2$. L'hypothèse sur l'absence de point fixe contractile assure que tout relevé de $f$ est sans point fixe. On applique alors le théorème de feuilletage équivariant à ce relevé de $f$ et au groupe du revêtement.
\end{proof}

Le corollaire \ref{CoroPatriceFeuillete} demande une hypothèse assez forte : $I$ ne doit pas avoir de point fixe contractile. Le théorème de Brouwer feuilleté ne prend toute sa force que lorsqu'on lui joint un résultat d'existence d'isotopies, dites \emph{maximales}, qui satisfont cette condition, démontré récemment par \cite{bguin2016fixed}, et qui implique le résultat suivant\footnote{L'article \cite{LCT1} se base sur le travail \cite{MR3329669}, qui donne un résultat plus faible et moins naturel, mais suffisant en pratique, concernant les isotopies maximales.}. Pour une isotopie $I$ entre homéomorphismes, on note $\fix(I) = \bigcap_{t\in[0,1]} \fix(f_t)$ l'ensemble des points dont l'orbite sous $I$ est entièrement fixe, et $\dom(I) = S\setminus \fix(I)$ le \emph{domaine} de l'isotopie $I$.

\begin{theo}\label{ThExistIstop}
Soit $f\in \homeo_0(S)$. Alors il existe une isotopie $I$ entre $f$ et l'identité telle que $I|_{\dom(I)}$ n'ait pas de point fixe contractile.
\end{theo}

Autrement dit, pour tout $z\in \fix(f)\setminus \fix(I)$, la trajectoire $I(z)$ n'est pas contractile dans $\dom(I)$. Ce théorème est crucial, puisqu'il permet, pour tout homéomorphisme homotope à l'identité, d'appliquer le corollaire \ref{CoroPatriceFeuillete} à la surface $\dom(I)$ et à l'isotopie $I|_{\dom(I)}$.
\medskip

Le corollaire \ref{CoroPatriceFeuillete} permet d'associer, à chaque trajectoire $I(z): t \mapsto f_t(z)$ d'une isotopie sans point fixe contractile, un arc positivement transverse qui lui est homotope à extrémités fixes. Cet arc est en fait uniquement déterminé par l'ensemble des feuilles de $\wt \F$ traversées par la trajectoire $\wt I(z)$ relevée au revêtement universel traversées par la trajectoire $\wt I(z)$ relevée au revêtement universel $\wt \dom(I)$ de $\dom(I)$ ; cela amène à considérer les trajectoires des isotopies dans l'espace des feuilles, autrement dit à quotienter par la relation sur les arcs \og rencontrer les mêmes feuilles\fg{} : la théorie du forçage va en fait nous permettre de détecter de nouvelles orbites dans cet espace quotient des feuilles.

\begin{defi}
Soit $S$ une surface, $\F$ un feuilletage de $S$, $f$ un homéomorphisme de $S$ homotope à l'identité et $I$ une isotopie reliant l'identité à $f$. Pour $n\in\N$ et $z\in S$, notons $I^n(z)$ la concaténation des arcs $I(f^j(z))$ pour $0\le j<n$. On dit qu'un chemin $\F$-transverse $\gamma : [a,b]\to S$ est \emph{admissible d'ordre $n\in\N$} si au revêtement universel, $\tilde f^n\big(\tilde\phi_{\tilde\gamma(a)}\big) \cap \tilde \phi_{\tilde\gamma(b)} \neq\emptyset$.
\end{defi}

\section{Apparition des fers à cheval}

Nous arrivons au c{\oe}ur de la théorie du forçage : on va commencer par énoncer la proposition fondamentale \ref{PropFond} de \cite{LCT1}, qui permet de trouver, lorsqu'apparaît une \emph{intersection $\F$-transverse}, de nouvelles orbites dans l'espace des feuilles : en gros, si deux orbites sous l'isotopie se croisent dans l'espace des feuilles, alors n'importe quel choix de direction fait à cet embranchement sera lui aussi réalisé par des orbites de l'isotopie (voir la figure \ref{FigEspFeuilles}). La preuve de cette proposition est élémentaire et repose seulement sur l'idée d'ensembles séparant le plan.

\begin{figure}
\begin{minipage}[c]{0.4\textwidth}
\begin{center}
\includegraphics[width=.7\linewidth]{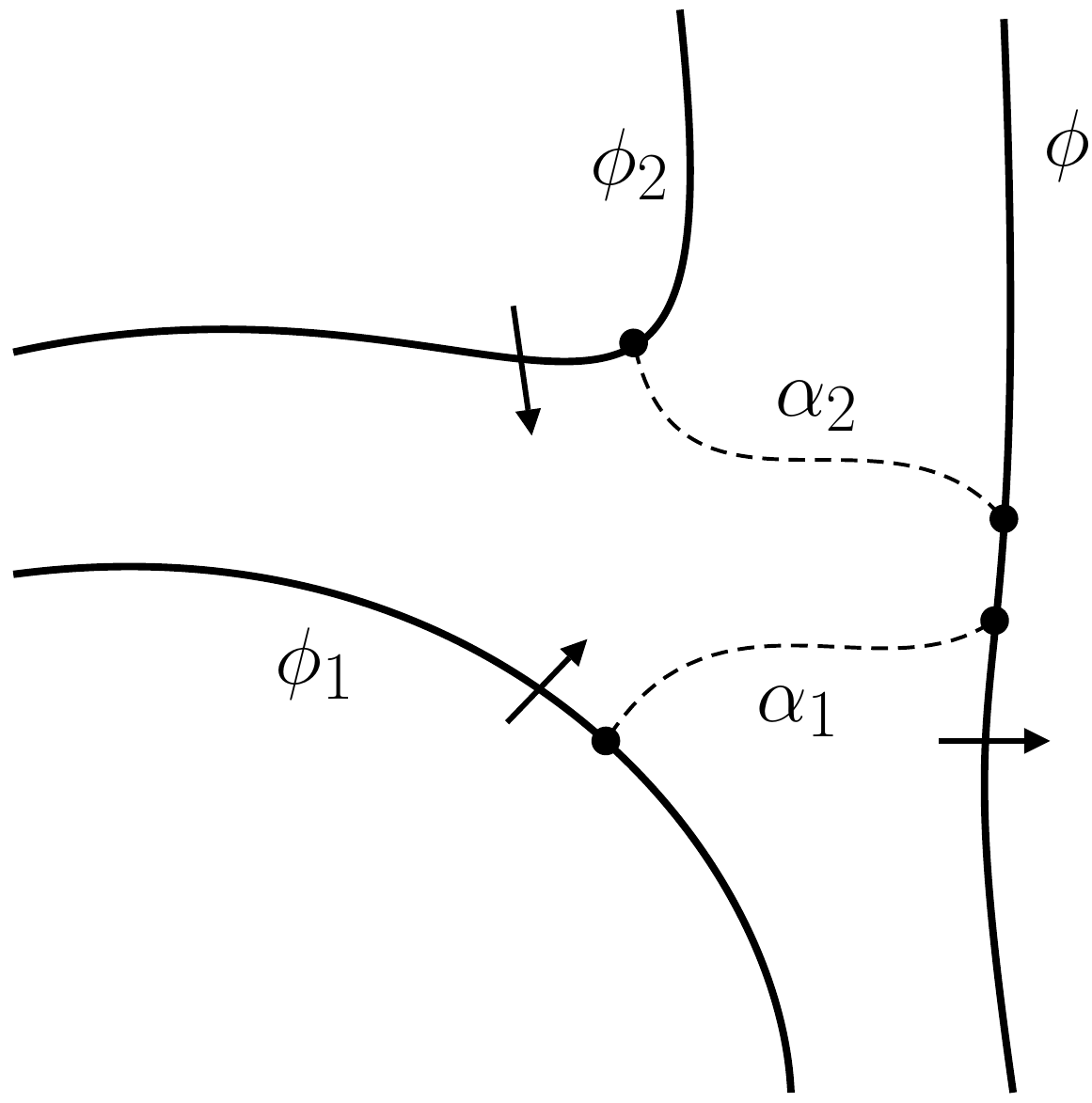}
\caption{\label{FigDessus}Suivant la définition \ref{DefDessus}, la feuille orientée $\phi_2$ est au-dessus de la feuille orientée $\phi_1$ relativement à $\phi$.}
\end{center}
\end{minipage}
\hfill
\begin{minipage}[c]{0.59\textwidth}
\begin{center}
\begin{tikzpicture}[scale=.75]
\draw[thick] (-3,1.5) -- (-1,.1) -- (1,.1) -- (3,-1.5);
\draw[thick] (-3,-1.5) -- (-1,-.1) -- (1,-.1) -- (3,1.5);
\draw[thick, dashed] (-3,1.7) -- (-1,.3) -- (1,.3) -- (3,1.7);
\draw[thick, dashed] (-3,-1.7) -- (-1,-.3) -- (1,-.3) -- (3,-1.7);
\end{tikzpicture}
\caption{Proposition fondamentale et trajectoires dans l'espace des feuilles : si les deux trajectoires en trait plein sont admissibles, alors les deux trajectoires en pointillés le sont aussi.}\label{FigEspFeuilles}
\end{center}
\end{minipage}
\end{figure}

Cette proposition est à la base de la preuve de l'existence de fers à cheval dans le cas d'une \emph{auto-intersection $\F$-transverse} (lorsque l'intersection transverse vient d'une seule et même orbite de l'isotopie) comme démontrée dans \cite{LCT2}.

Commençons par définir rigoureusement la notion d'intersection $\F$-transverse.

\begin{defi}\label{DefDessus}
Soient $\phi$, $\phi_1$ et $\phi_2$ trois droites orientées du plan. Nous dirons que \emph{$\phi_2$ est au-dessus de $\phi_1$ relativement à $\phi$} si (voir la figure \ref{FigDessus}) :
\begin{itemize}
\item ces trois droites sont deux à deux disjointes ;
\item aucune des deux ne sépare les deux autres ;
\item si $\alpha_i$, $i=1,2$ sont deux chemins disjoints reliant chacun un point de $\phi_i$ à un point $\phi(t_i)$ de $\phi$\footnote{On utilise un paramétrage des feuilles, compatible avec leur (co-)orientation.}, alors $t_2>t_1$.
\end{itemize}
\end{defi}

Soit $\F$ un feuilletage du plan, et pour $i=1,2$, $J_i$ deux intervalles réels et $\gamma_i = J_i\to\R^2$ deux chemins $\F$ transverses.

\begin{defi}\label{DefInterTrans}
On dit que $\gamma_1 : J_1\to\R^2$ et $\gamma_2 : J_2\to\R^2$ \emph{s'intersectent $\F$-transversalement et positivement\footnote{Par la suite, il nous arrivera d'omettre le \og positivement \fg.}} s'il existe $a_i<t_i< b_i\in J_i$ tels que (voir la figure \ref{FigInterTransvers}) :
\begin{itemize}
\item $\phi_{\gamma_1(t_1)} = \phi_{\gamma_2(t_2)} = \phi$ ;
\item $\phi_{\gamma_2(a_2)}$ est au-dessus de $\phi_{\gamma_1(a_1)}$ relativement à $\phi$ ;
\item $\phi_{\gamma_1(b_1)}$ est au-dessus de $\phi_{\gamma_2(b_2)}$ relativement à $\phi$.
\end{itemize} 
\end{defi}

\begin{figure}
\begin{minipage}[c]{0.54\textwidth}
\begin{center}
\includegraphics[width=\linewidth]{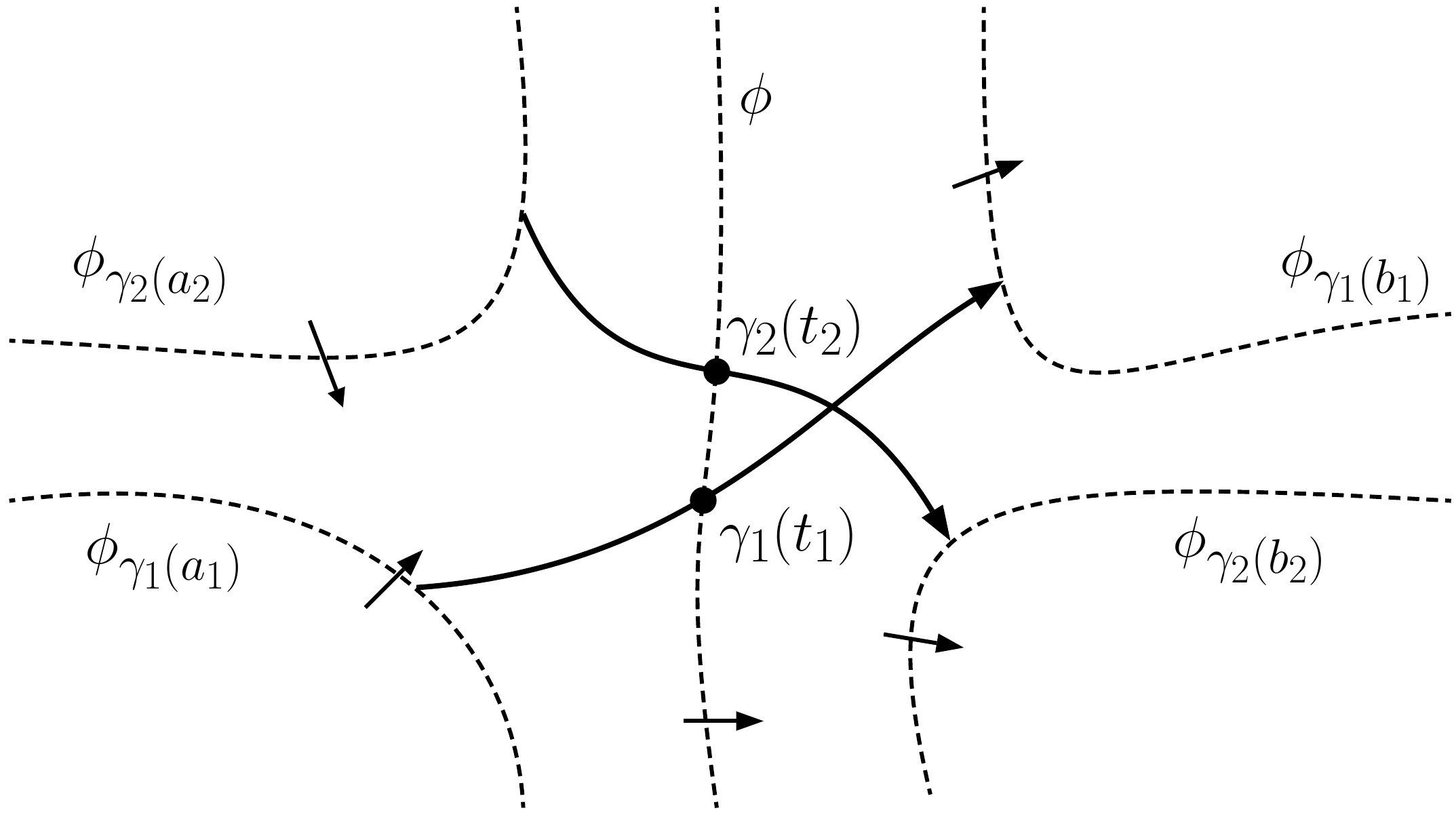}
\caption{\label{FigInterTransvers}Suivant la définition \ref{DefInterTrans}, les chemins $\gamma_1$ et $\gamma_2$ s'intersectent $\F$-transversalement et positivement.}
\end{center}
\end{minipage}
\hfill
\begin{minipage}[c]{0.45\textwidth}
\begin{center}
\includegraphics[width=\linewidth]{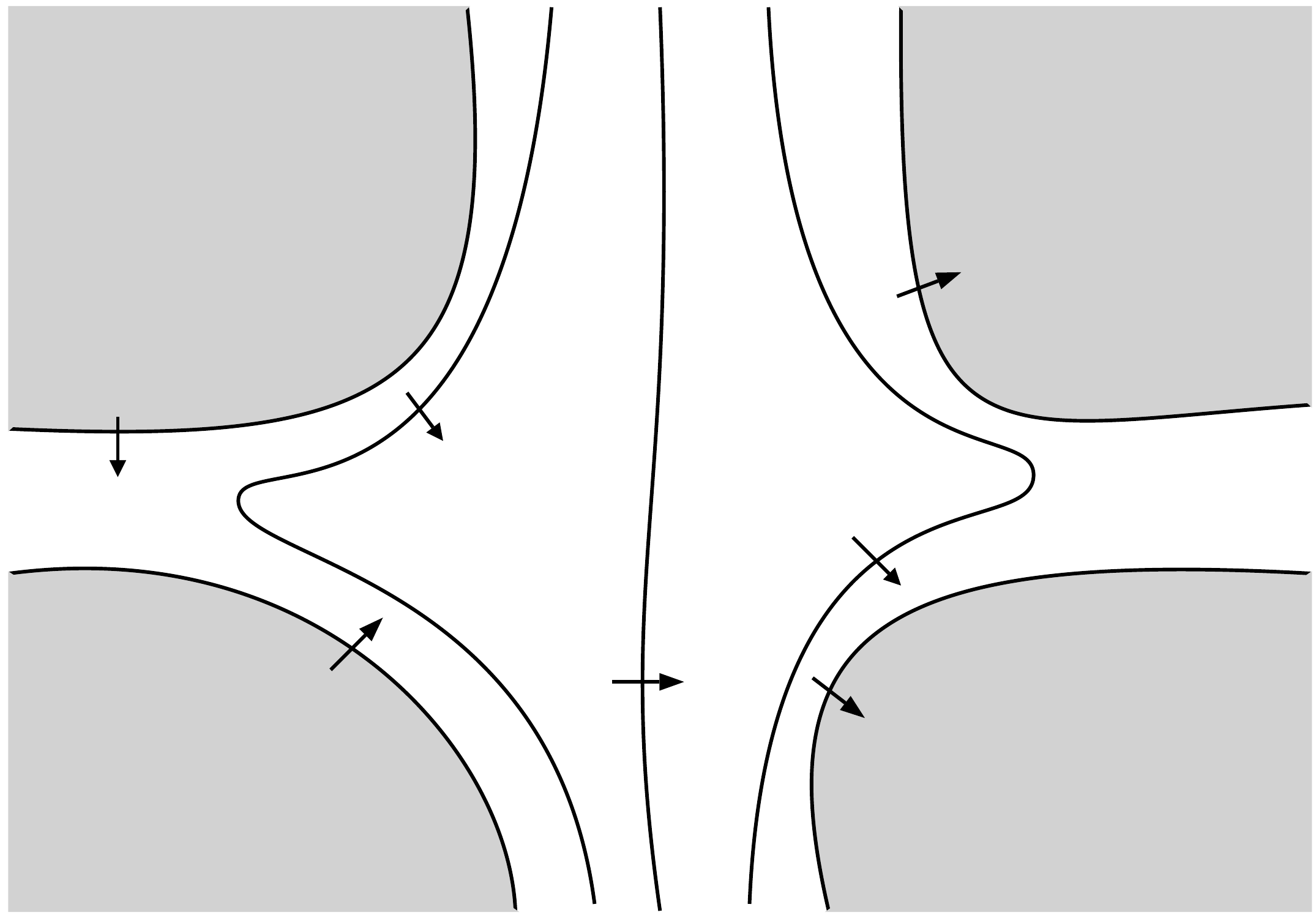}
\caption{\label{FigPropFond1}Configuration possible des feuilles dans la proposition fondamentale.}
\end{center}
\end{minipage}
\end{figure}

Remarquons qu'en cas d'intersection $\F$-transverse, quitte à modifier $\gamma_1$ ou $\gamma_2$ dans leur classe d'équivalence, on peut supposer que $\gamma_1(t_1) = \gamma_2(t_2)$ (ce qu'on fera par la suite).	

Voici la proposition fondamentale de la théorie du forçage (proposition 20 de \cite{LCT1}).

\begin{prop}\label{PropFond}
Soit $f$ un homéomorphisme du plan et $\F$ un feuilletage de Brouwer pour $f$.
On suppose que $\gamma_i : [a_i,b_i] \to \R^2$ ($i=1,2$) s'intersectent $\F$-transversalement en $\gamma_1(t_1) = \gamma_2(t_2)$. Si chaque $\gamma_i$ est admissible d'ordre $n_i$, alors les chemins $\gamma_1|_{[a_1,t_1]}\gamma_2|_{[t_2,b_2]}$ et $\gamma_2|_{[a_2,t_2]}\gamma_1|_{[t_1,b_1]}$ sont tous deux admissibles d'ordre $n_1+n_2$. De plus, soit ces deux chemins sont admissibles d'ordre $\max(n_1,n_2)$, soit au moins l'un des deux est admissible d'ordre $\min(n_1,n_2)$.
\end{prop}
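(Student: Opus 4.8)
The strategy is to translate everything into statements about subsets of the plane that separate it, using the Brouwer-foliation hypothesis as the only dynamical input. Recall that a foliation of Brouwer means: for every leaf $\phi$ the closed right side $\overline{R(\phi)}$ is sent strictly inside $R(\phi)$ by $f$ (equivalently, $f(\phi)$ lies in $R(\phi)$ and $\overline{f(\overline{R(\phi)})}\subset R(\phi)$), hence for $m<n$ the leaf $f^n(\phi)$ sits strictly to the right of $f^m(\phi)$. Admissibility of $\gamma_i$ of order $n_i$ says $\tilde f^{n_i}(\phi_{\gamma_i(a_i)})\cap\phi_{\gamma_i(b_i)}\neq\emptyset$; since these are leaves of a Brouwer foliation for $f$, this is really a statement that the leaf $\phi_{\gamma_i(b_i)}$ is "reachable" from $\phi_{\gamma_i(a_i)}$ in $n_i$ steps, and by the monotonicity above such reachability statements compose and compare well. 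The whole proof is a bookkeeping of these comparisons across the crossing point $\phi=\phi_{\gamma_1(t_1)}=\phi_{\gamma_2(t_2)}$, where we have arranged $\gamma_1(t_1)=\gamma_2(t_2)$.

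First I would set up notation: write $\phi^-_i=\phi_{\gamma_i(a_i)}$, $\phi^+_i=\phi_{\gamma_i(b_i)}$, and use the two "above relative to $\phi$" hypotheses from Definition \ref{DefInterTrans}: $\phi^-_2$ above $\phi^-_1$ rel $\phi$, and $\phi^+_1$ above $\phi^+_2$ rel $\phi$. The key geometric lemma to isolate is: if $\gamma$ is an $\F$-transverse path from a point of $\phi_{\gamma(a)}$ to a point of $\phi_{\gamma(b)}$ crossing $\phi$, then the sub-path before the crossing lies on the left of $\phi$ and everything after lies on the right — so $\phi$ separates $\phi_{\gamma(a)}$ from $\phi_{\gamma(b)}$ in a controlled way, and the "above" relation lets one fit the four leaves $\phi^-_1,\phi^-_2,\phi^+_1,\phi^+_2$ into a definite cyclic order around $\phi$. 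Next, using that $\tilde f^{n_i}$ preserves the foliation's Brouwer property and maps left/right sides into left/right sides, I would push the half of $\gamma_i$ before $t_i$ forward: $\tilde f^{n_i}$ applied to the portion of $\gamma_i$ from $a_i$ to $t_i$ gives an $\F$-transverse arc from $\tilde f^{n_i}(\phi^-_i)$ to $\tilde f^{n_i}(\phi)$, and since $\gamma_i$ is admissible of order $n_i$, $\tilde f^{n_i}(\phi^-_i)$ meets $\phi^+_i$. Concatenating the relevant pieces and invoking the intermediate-crossing structure, one shows $\tilde f^{n_1+n_2}(\phi^-_1)$ meets $\phi^+_2$ and $\tilde f^{n_1+n_2}(\phi^-_2)$ meets $\phi^+_1$, i.e. the two spliced paths $\gamma_1|_{[a_1,t_1]}\gamma_2|_{[t_2,b_2]}$ and $\gamma_2|_{[a_2,t_2]}\gamma_1|_{[t_1,b_1]}$ are admissible of order $n_1+n_2$.

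For the sharper "$\max$ or $\min$" dichotomy, I would argue by the position of the crossing leaf $\phi$ relative to the iterates. Suppose without loss of generality $n_1\le n_2$. One compares the leaf $\tilde f^{n_1}(\phi)$ (through which the image of $\gamma_1$'s first half passes) against $\phi$ itself: because $\F$ is Brouwer, $\tilde f^{n_1}(\phi)$ is strictly to the right of $\phi$, and the transversality hypotheses pin down whether $\tilde f^{n_1}(\phi)$ already crosses $\gamma_2$'s second half (giving admissibility of order $n_1=\min$) or whether one must instead use $\tilde f^{n_2}$ on $\gamma_2$'s first half and find that it crosses $\gamma_1$'s second half at level $n_2$, while the symmetric splice then only needs level $\max(n_1,n_2)$ on both — this requires a careful case split on which of the two "above rel $\phi$" inequalities is exploited and on whether the relevant iterated leaf lands left or right of $\phi$. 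The main obstacle is precisely this last dichotomy: making the separation argument robust enough to guarantee that in every configuration at least one of the two spliced paths achieves order $\min(n_1,n_2)$ (or both achieve $\max$) without losing a step, and handling cleanly the finitely many ways the four leaves and their iterates can be cyclically ordered around $\phi$. The earlier parts are routine once the separation lemma and the monotonicity of leaf-iterates are in hand; everything is genuinely elementary plane topology, with no index computation needed.
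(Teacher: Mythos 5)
Your overall philosophy is the right one --- the paper's proof is indeed elementary plane topology built on a separation argument, with no index computation --- but two of your steps do not survive scrutiny, and the second is where the content of the proposition lives. You write that $\tilde f^{n_i}$ applied to $\gamma_i|_{[a_i,t_i]}$ \og gives an $\F$-transverse arc from $\tilde f^{n_i}(\phi^-_i)$ to $\tilde f^{n_i}(\phi)$\fg. A Brouwer foliation for $f$ is not $f$-invariant: the only dynamical input is $\overline{f(R(\phi))}\subset R(\phi)$ for every leaf $\phi$. Images of leaves are not leaves and images of transverse arcs are not transverse, so you cannot propagate transversality or cyclic orderings of leaves through the iterates. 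The objects that do behave under iteration are the closed half-planes, and the proof works exclusively with the sets
$X_i=f^{n_i}\bigl(\overline{L\phi_{\gamma_i(a_i)}}\bigr)\cup\overline{R\phi_{\gamma_i(b_i)}}$ and
$Y_i=f^{-n_i}\bigl(\overline{R\phi_{\gamma_i(b_i)}}\bigr)\cup\overline{L\phi_{\gamma_i(a_i)}}$,
which are \emph{connected} precisely because $\gamma_i$ is admissible of order $n_i$, and which satisfy the disjointness relations $f^{k_1}(\overline{L\phi_{\gamma_1(a_1)}})\cap f^{k_2}(\overline{L\phi_{\gamma_2(a_2)}})=f^{k_1}(\overline{R\phi_{\gamma_1(b_1)}})\cap f^{k_2}(\overline{R\phi_{\gamma_2(b_2)}})=\emptyset$ coming from the Brouwer property combined with the two \og au-dessus relativement à $\phi$\fg{} hypotheses. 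Your proposal never identifies these connected sets, so the \og bookkeeping of comparisons\fg{} has nothing concrete to book-keep.

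More seriously, the max/min dichotomy --- which you yourself flag as \og the main obstacle\fg{} --- is left unproved, and your proposed route (comparing $\tilde f^{n_1}(\phi)$ with $\phi$ and enumerating cyclic orders of iterated leaves) does not lead to it, for the same reason as above. The missing idea is a single separation lemma. Assume one splice, say $\gamma_2|_{[a_2,t_2]}\gamma_1|_{[t_1,b_1]}$, is not admissible of order $n_2$; then $f^{n_2}(\phi_{\gamma_2(a_2)})\cap\phi_{\gamma_1(b_1)}=\emptyset$, hence $X_2\cap R\phi_{\gamma_1(b_1)}=\emptyset$, and yet $X_2$ \emph{separates} $\overline{L\phi_{\gamma_1(a_1)}}$ from $\overline{R\phi_{\gamma_1(b_1)}}$: any arc joining these two sets and any arc inside $X_2$ joining $\overline{L\phi_{\gamma_2(a_2)}}$ to $\overline{R\phi_{\gamma_2(b_2)}}$ both cross the central leaf $\phi$, and if they were disjoint the transverse-intersection hypothesis would force contradictory orderings of the two crossing parameters on $\phi$. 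Therefore $X_2$ meets both connected sets $X_1$ and $Y_1$; the first intersection yields $f^{n_1}(\phi_{\gamma_1(a_1)})\cap\phi_{\gamma_2(b_2)}\neq\emptyset$, i.e.\ admissibility of $\gamma_1|_{[a_1,t_1]}\gamma_2|_{[t_2,b_2]}$ of order $n_1$, and the second yields $f^{n_2}(\phi_{\gamma_2(a_2)})\cap f^{-n_1}(\phi_{\gamma_1(b_1)})\neq\emptyset$, i.e.\ admissibility of $\gamma_2|_{[a_2,t_2]}\gamma_1|_{[t_1,b_1]}$ of order $n_1+n_2$. Running the same argument with $Y_1$ as separator, and using the symmetry $\gamma_1\leftrightarrow\gamma_2$, gives both the $n_1+n_2$ statement and the max/min dichotomy in a few lines; this is the step your plan is missing.
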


\begin{proof}
L'idée de la preuve est résumée par la figure \ref{FigPropFond2}. Remarquons que, parce que les feuilles de $\F$ sont des droites de Brouwer, et par l'hypothèse que $\phi_{\gamma_2(a_2)}$ est au-dessus de $\phi_{\gamma_1(a_1)}$ relativement à $\phi$, on a, pour tout $k\ge 0$, 
\[f^{-k}(\overline{L \phi_{\gamma_1(a_1)}}) \cap \overline{L \phi_{\gamma_2(a_2)}}
\subset \overline{L \phi_{\gamma_1(a_1)}} \cap \overline{L \phi_{\gamma_2(a_2)}}
= \emptyset.\]
Plus généralement, pour tous $k_1,k_2\in\Z$,
\begin{equation}\label{EqPasInter}
f^{k_1}(\overline{L \phi_{\gamma_1(a_1)}}) \cap f^{k_2}(\overline{L \phi_{\gamma_2(a_2)}}) = f^{k_1}(\overline{R \phi_{\gamma_1(b_1)}}) \cap f^{k_2}(\overline{R \phi_{\gamma_2(b_2)}}) = \emptyset.
\end{equation}

Pour $i=1,2$, définissons les ensembles
\[X_i = f^{n_i}(\overline{L \phi_{\gamma_i(a_i)}}) \cup \overline{R \phi_{\gamma_i(b_i)}} \qquad \text{et} \qquad Y_i = f^{-n_i}(\overline{R \phi_{\gamma_i(b_i)}}) \cup \overline{L \phi_{\gamma_i(a_i)}}.\]
Par hypothèse sur l'admissibilité des chemins, ces quatre ensembles sont connexes.

Si $\gamma_2|_{[a_2,t_2]}\gamma_1|_{[t_1,b_1]}$ n'est pas admissible d'ordre $n_2$, alors $f^{n_2}(\phi_{\gamma_2(a_2)}) \cap \phi_{\gamma_1(b_1)} = \emptyset$, si bien que par \eqref{EqPasInter}, $X_2 \cap R\phi_{\gamma_1(b_1)} = \emptyset$.

\begin{figure}
\begin{center}
\includegraphics[width=.6\linewidth]{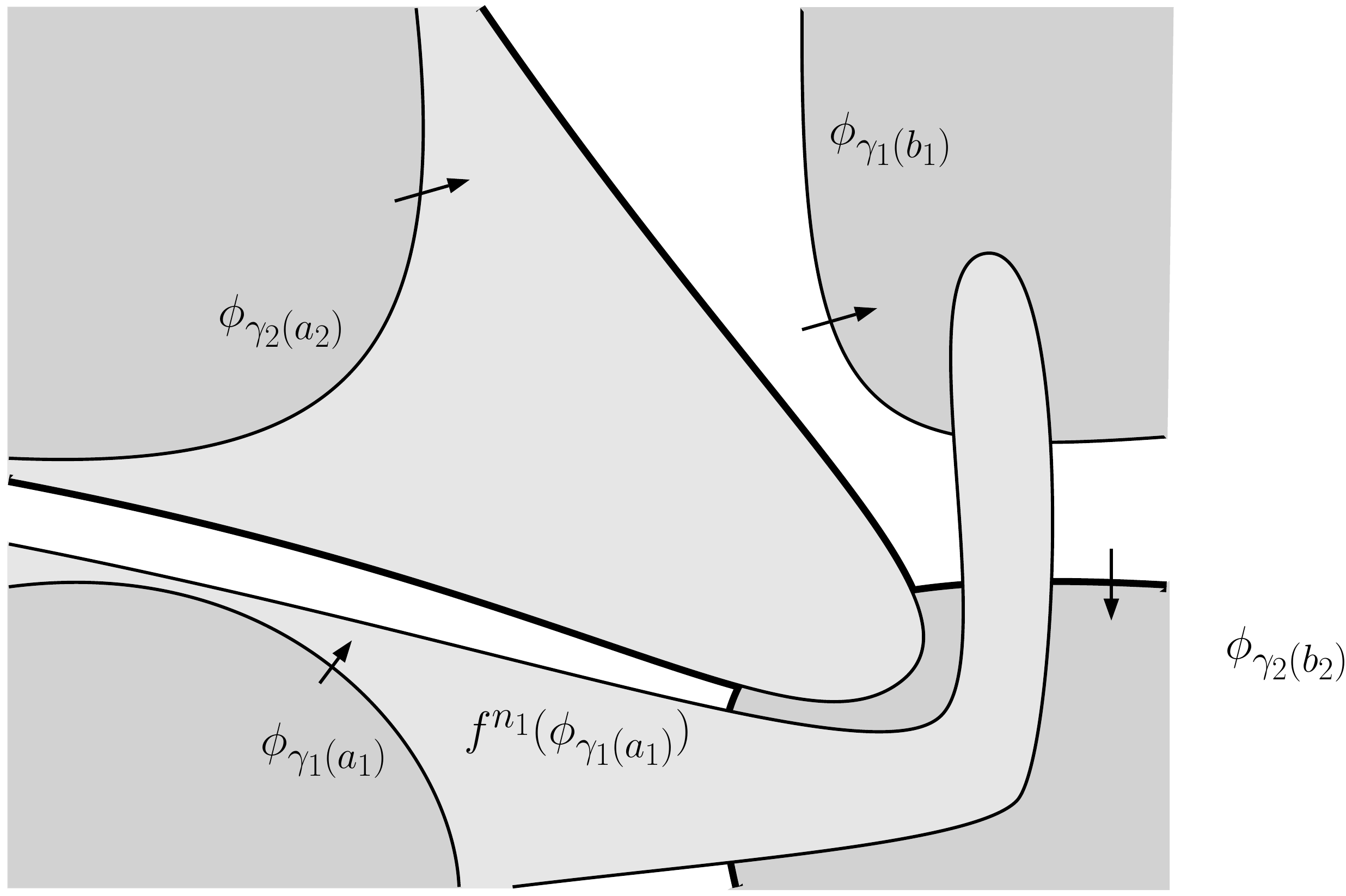}
\caption{\label{FigPropFond2}Proposition fondamentale : si $f^{n_2}(\phi_{\gamma_2(a_2)})$ ne rencontre pas $\phi_{\gamma_1(b_1)}$, cela force l'intersection entre $f^{n_1}(\phi_{\gamma_1(a_1)})$ et $\phi_{\gamma_2(b_2)}$. La frontière de l'ensemble $X_2$ est représenté en traits gras.}
\end{center}
\end{figure}

\begin{lemm}\label{LemPropFond}
L'ensemble $X_2$ sépare $\overline{L\phi_{\gamma_1(a_1)}}$ et $\overline{R\phi_{\gamma_1(b_1)}}$.
\end{lemm}

\begin{proof}
Considérons un chemin $\alpha_1 : [0,1]\to \R^2$ reliant $\overline{L\phi_{\gamma_1(a_1)}}$ à $\overline{R\phi_{\gamma_1(b_1)}}$, ainsi que $\alpha_2 : [0,1]\to \R^2$ un chemin reliant $\overline{L\phi_{\gamma_2(a_2)}}$ à $\overline{R\phi_{\gamma_2(b_2)}}$ et inclus dans $X_2$. Ces deux chemins rencontrent la feuille centrale $\phi$ en respectivement $u_1$ et $u_2$. Si $\alpha_1$ et $\alpha_2$ étaient disjoints, le fait que l'intersection est transverse impliquerait que d'une part $u_1<u_2$ et d'autre part $u_2<u_1$. Cette contradiction implique que les chemins $\alpha_1$ et $\alpha_2$ s'intersectent.
\end{proof}

Le lemme \ref{LemPropFond} implique que chacune des intersections $X_1 \cap X_2$ et $Y_1\cap X_2$ est non vide. La première propriété implique que $f^{n_1}(\phi_{\gamma_1(a_1)}) \cap \phi_{\gamma_2(b_2)} \neq \emptyset$, autrement dit que
$\gamma_1|_{[a_1,t_1]}\gamma_2|_{[t_2,b_2]}$ est admissible d'ordre $n_1$. La seconde implique que $f^{n_2}(\phi_{\gamma_2(a_2)}) \cap f^{-n_1}(\phi_{\gamma_1(b_1)}) \neq \emptyset$, autrement dit que
$\gamma_2|_{[a_2,t_2]}\gamma_1|_{[t_1,b_1]}$ est admissible d'ordre $n_1+n_2$.

Un raisonnement similaire (où $Y_1$ est utilisé pour séparer) montre que si $\gamma_2|_{[a_2,t_2]}\gamma_1|_{[t_1,b_1]}$ n'est pas admissible d'ordre $n_1$, alors il est admissible d'ordre $n_1+n_2$, et $\gamma_1|_{[a_1,t_1]}\gamma_2|_{[t_2,b_2]}$ est admissible d'ordre $n_2$.

En résumé, $\gamma_2|_{[a_2,t_2]}\gamma_1|_{[t_1,b_1]}$ est admissible d'ordre $n_1+n_2$, et s'il n'est pas admissible d'ordre $\max(n_1,n_2)$, alors l'autre chemin $\gamma_1|_{[a_1,t_1]}\gamma_2|_{[t_2,b_2]}$ est admissible d'ordre $\min(n_1,n_2)$. On conclut par symétrie des rôles joués par $\gamma_1$ et $\gamma_2$.
\end{proof}

Nous allons maintenant en déduire l'existence d'un fer à cheval topologique dans le cas d'une auto-intersection $\F$-transverse.

Soit $S$ une surface orientée et $f\in \homeo_0(S)$. Soit $I$ une isotopie maximale entre $f$ et l'identité, donnée par le théorème \ref{ThExistIstop}, ainsi que $\F$ le feuilletage transverse de $\dom(I)$ pour $f_{|\dom(I)}$ donné par le corollaire \ref{CoroPatriceFeuillete}. On définit $\widetilde \dom(I)$ le revêtement universel de $\dom(I)$, et $\widetilde f$, $\widetilde I$ et $\widetilde \F$ les relevés de $f|_{\dom(I)}$, $I_{|\dom(I)}$ et $\F$ à $\widetilde \dom(I)$.

Voici le théorème M de \cite{LCT2}, qui est une amélioration significative des résultats de \cite{LCT1}, où les auteurs n'obtenaient que la positivité de l'entropie et la croissance exponentielle du nombre de points périodiques, sous l'hypothèse plus forte que le point $z$ est récurrent.

\begin{theo}\label{ThExistCheval}
Supposons qu'il existe $z\in \widetilde \dom(I)$, $q\ge 2$ et $T$ un automorphisme de revêtement de $\widetilde \dom(I)$ tels que les trajectoires $\widetilde I_\F^q(z)$ et $T\widetilde I_\F^q(z)$ aient une intersection $\wt \F$-transverse. Alors $f$ possède un fer à cheval topologique, et $h(f)\ge (\log 4)/3q$.
\end{theo}

Le fer à cheval obtenu est dit \emph{rotationnel}, propriété qui permet d'en déduire des propriétés concernant l'ensemble de rotation de l'homéomorphisme.

La preuve de ce théorème est relativement longue et technique, nous pouvons tout de même en donner les idées principales.

\begin{figure}
\begin{center}
\includegraphics[width=\linewidth]{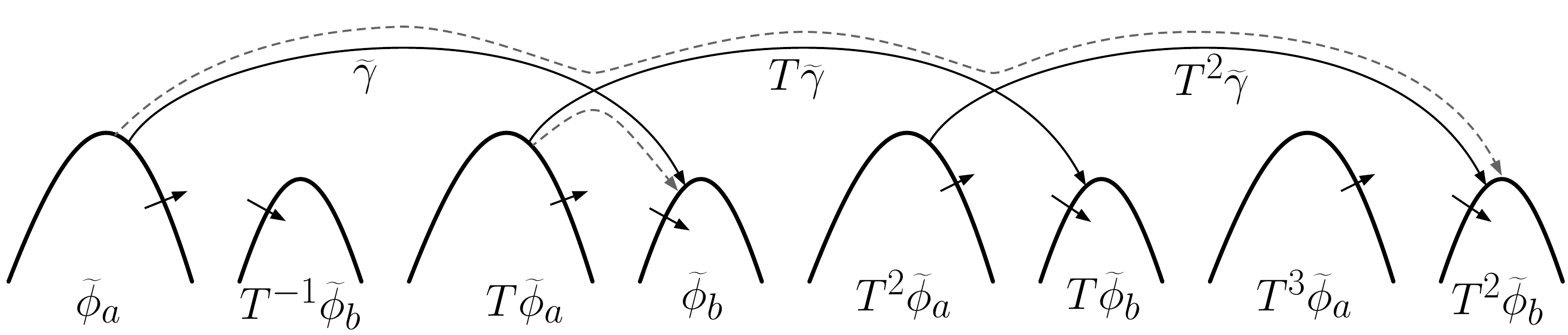}
\caption{\label{FigCheval1}Utilisation de la proposition fondamentale \ref{PropFond} pour trouver de nouvelles orbites (en gris pointillé) dans la preuve du théorème \ref{ThExistCheval}.}
\end{center}
\end{figure}

\begin{proof}[Idées de preuve]
Notons $\wt\gamma : [a,b]\to \widetilde \dom(I)$ la trajectoire $\wt I_\F^q(z)$. Par hypothèse, il existe $a<s<t<b$ tels que $\wt\gamma$ et $T\wt\gamma$ ont une intersection $\wt \F$-transverse en $T\wt\gamma(s) = \wt\gamma(t)$. Pour simplifier, nous noterons $\wt\phi_a = \wt\phi_{\wt\gamma(a)}$ et $\wt\phi_b = \wt\phi_{\wt\gamma(b)}$, et supposerons $q\ge 2$.

Pour commencer, on démontre l'existence de nouvelles orbites dans l'espace des feuilles, assurée par la proposition fondamentale.

\begin{lemm}\label{LemExistCheval1}
Pour tout $q\ge 2$ et tout $p\in\{0,\cdots,q\}$, on a $\wt f^q(\wt\phi_a) \cap T^{p-1}\wt\phi_b \neq\emptyset$.
\end{lemm}

L'idée de ce lemme est résumée par la figure \ref{FigCheval1} ; la preuve consiste à appliquer successivement la proposition fondamentale \ref{PropFond} aux translatés de $\wt\gamma$ (en fait on utilise le corollaire 22 de \cite{LCT1}).

Ce lemme implique sans trop de difficulté une version faible du théorème \ref{ThExistCheval}, qui énonce que pour tout $q\ge 2$ et $p\in\{1,\dots,q\}$, il existe un point fixe à $T^{-p}\wt f^q$ (proposition 12 de \cite{LCT2}). 

Voyons comment démontrer cette implication. Pour $p \ge 0$ et $q\ge 1$, on considère les ensembles $\mathcal X_{p,q}$ collectant tous les sous-chemins de $T^p \wt f^{-q}(\wt\phi_b)$ reliant $\wt\phi_a$ à $T\wt\phi_a$ et disjoints des gauches de tous les $T^k\wt\phi_a$ (voir la figure \ref{FigCheval3}) ; plus formellement $\mathcal X_{p,q}$ est l'ensemble des chemins reliant $\wt\phi_a$ à $T\wt\phi_a$, qui sont composante connexe de
\[T^p \wt f^{-q}(\wt\phi_b) \cap \bigcap_{k\in\Z}R\big( T^k\wt\phi_a\big).\]
\'Etudions le cas $p\le q-1$. Le lemme \ref{LemExistCheval1} assure que la courbe $T^p\wt f^{-q}(\wt\phi_b)$ rencontre à la fois $\wt\phi_a$ et $T\wt\phi_a$. Alors un bout de la courbe $T^p\wt f^{-q}(\wt\phi_b)$ joignant $\wt\phi_a$ à $T\wt\phi_a$ et disjoint de $L(\wt\phi_a) \cup L(T\wt\phi_a)$ ne rencontre aucun des $L(T^k\wt\phi_a)$ (autrement dit $T^p\wt f^{-q}(\wt\phi_b)$ doit rencontrer les $L(T^k\wt\phi_a)$ les uns à la suite des autres) ; cela vient du fait que la projection de courbe $T^p\wt f^{-q}(\wt\phi_b)$ sur $\dom(I)$ est simple\footnote{Moralement, si cela n'était pas vrai, en quotient par $T$, on obtiendrait une lacet simple dans l'anneau $\wt\dom(I)/T$ de valeur au moins 2 en homologie. Pour les détails, voir le lemme 10 de \cite{LCT2}.}. On vient de montrer que l'ensemble $\mathcal X_{p,q}$ est non vide. De la même manière, $\mathcal X_{p+1,q}$ est lui aussi non vide.

\begin{figure}
\begin{minipage}[c]{0.49\textwidth}
\includegraphics[width=\linewidth]{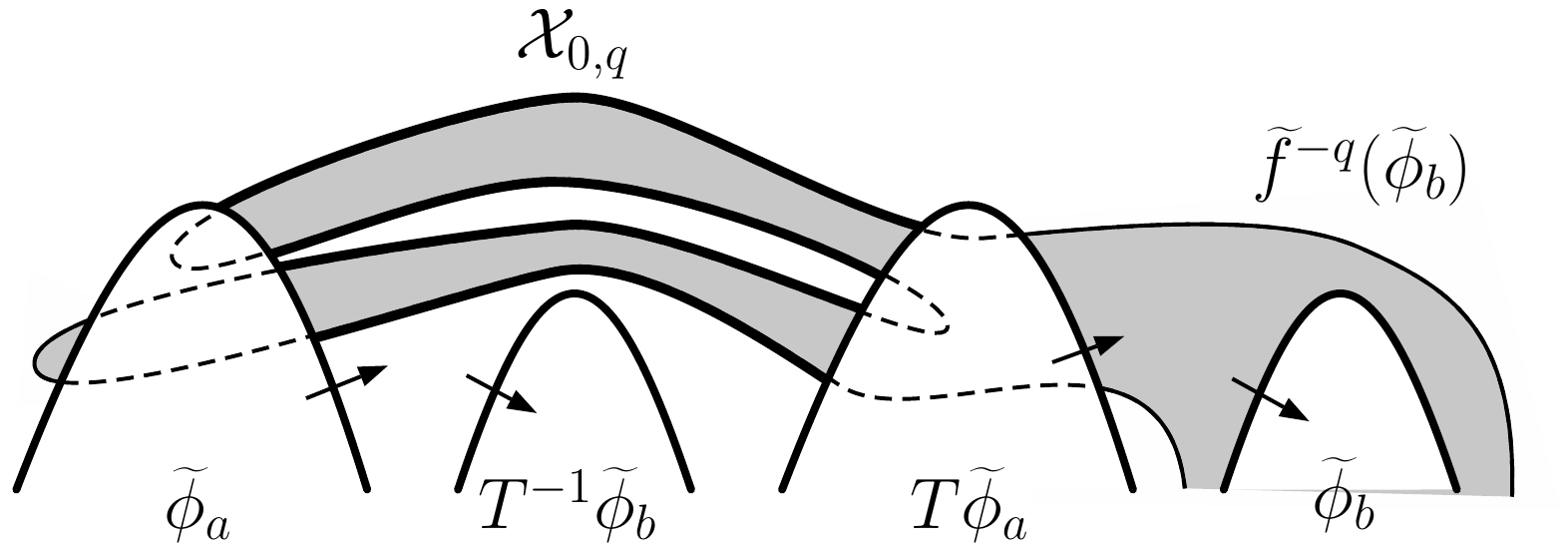}
\caption{\label{FigCheval3}L'ensemble $\mathcal X_{0,q}$ dans la preuve du théorème \ref{ThExistCheval}.}
\end{minipage}
\hfill
\begin{minipage}[c]{0.49\textwidth}
\includegraphics[width=\linewidth]{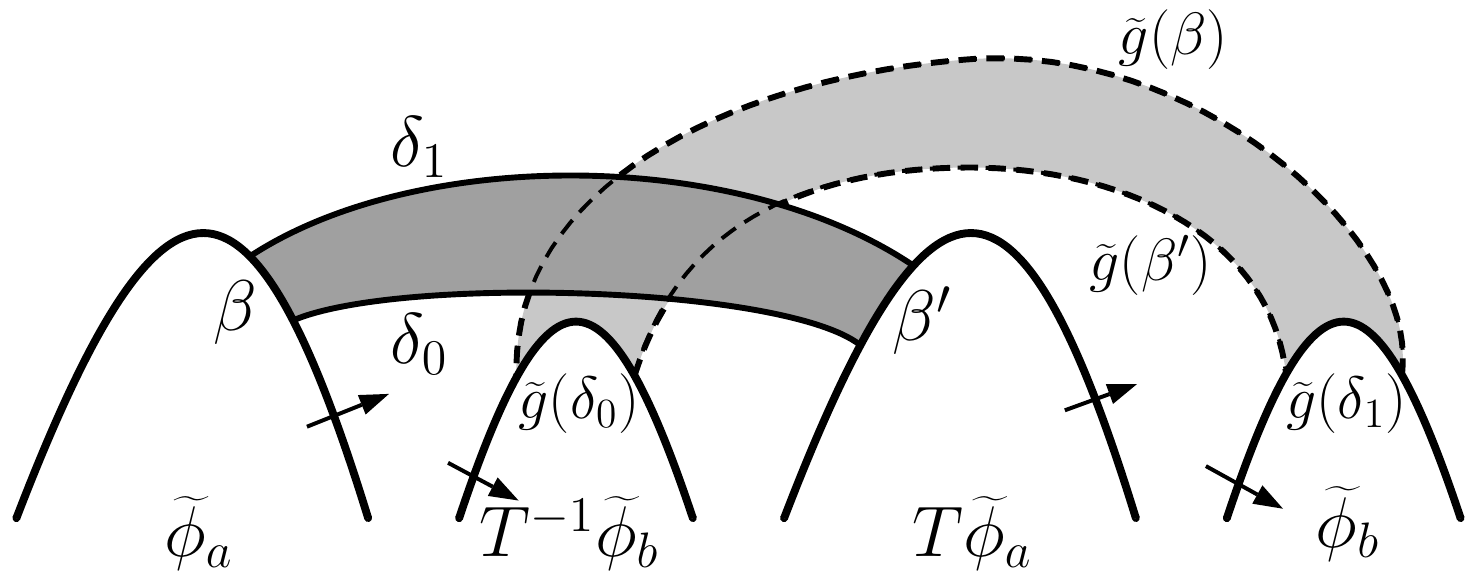}
\caption{\label{FigCheval2}Apparition de l'intersection markovienne dans la preuve du théorème \ref{ThExistCheval}.}
\end{minipage}
\end{figure}

On considère alors $\delta_0\in \mathcal X_{p,q}$ reliant $\wt\phi_a(t_0)$ à $T\wt\phi_a(t'_0)$ et $\delta_1\in \mathcal X_{p+1,q}$ reliant $\wt\phi_a(t_1)$ à $T\wt\phi_a(t'_1)$. On pose aussi $\beta = \wt\phi_{a|[t_0,t_1]}$ et $\beta' = T\wt\phi_{a|[t'_0,t'_1]}$ (voir la figure \ref{FigCheval2}). Visuellement, le rectangle formé par les quatre courbes $\delta_0$, $\delta_1$, $\beta$ et $\beta'$ semble avoir une intersection markovienne avec son image par $\wt g = T^{-p}\wt f^q$. La vérification de ce fait est un peu fastidieuse, nous renvoyons au calcul de l'indice de Lefschetz de la courbe faite dans \cite{LCT2}. Ce calcul implique l'existence d'un point fixe à $\wt g$.
\medskip

L'existence du fer à cheval demande un peu plus de travail, mais les idées sont similaires à celles qu'on vient de voir. Si $q\ge 3$ et $1<p<q$, alors on peut montrer, par des arguments topologiques simples, que les ensembles $\mathcal X_{p-1,q}$ et $\mathcal X_{p+1,q}$ sont non vides, et qu'entre deux courbes $\delta_1\in \mathcal X_{p-1,q}$ et $\delta_4\in \mathcal X_{p+1,q}$, il existe au moins deux courbes $\delta_2,\delta_3\in \mathcal X_{p,q}$. Ces quatre courbes permettent de définir deux rectangles, comme sur la figure \ref{FigCheval4}, l'image de chacun intersectant de manière markovienne chacun des deux autres. Lorsqu'on considère le rectangle obtenu en remplissant ces deux petits rectangles (i.e. le rectangle situé entre $\delta_1$ et $\delta_4$), on obtient un fer à cheval topologique. Là encore, les vérifications sont un peu fastidieuses mais ne font pas vraiment intervenir de nouvelles idées. Pour justifier rigoureusement l'existence d'un tel fer à cheval, les auteurs utilisent la théorie de l'indice de Conley ; il est néanmoins très probable qu'un raisonnement basé sur l'invariant plus facile à manipuler qu'est l'indice de Lefschetz d'une courbe permette d'aboutir aux mêmes résultats.
\end{proof}

\begin{figure}
\begin{center}
\includegraphics[width=.7\linewidth]{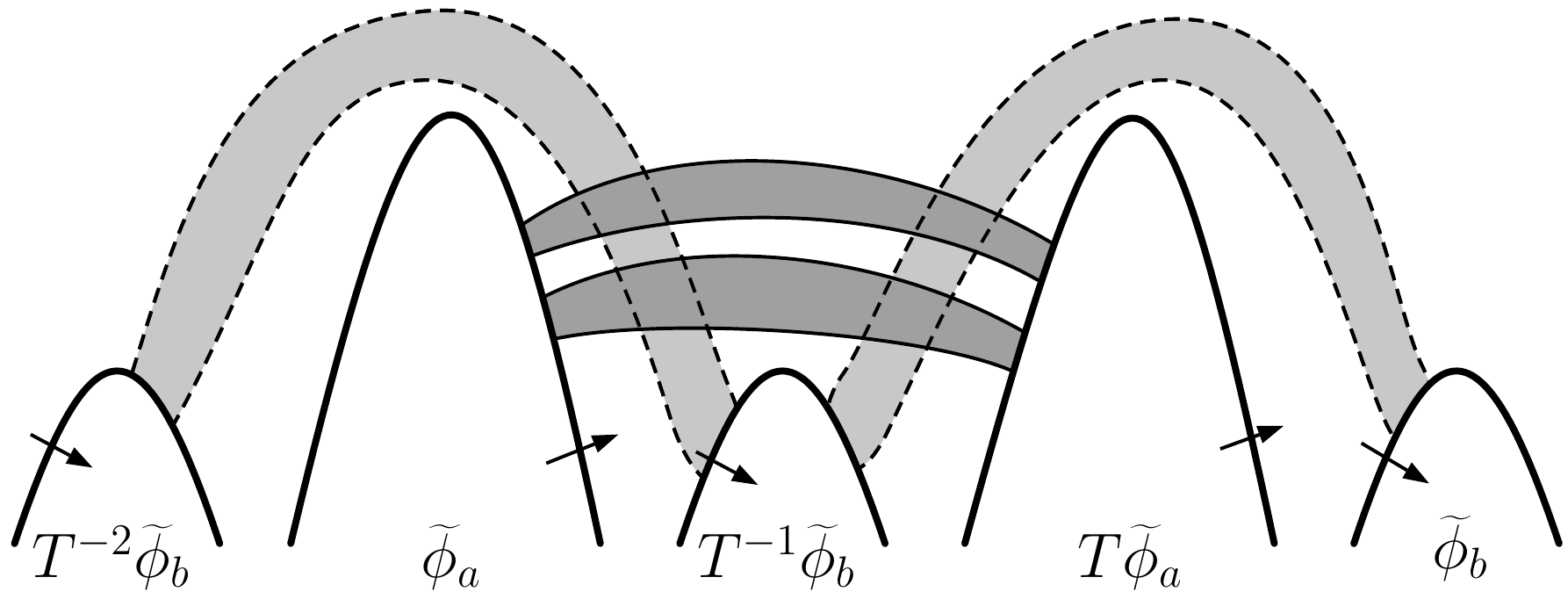}
\caption{\label{FigCheval4}Apparition du fer à cheval.}
\end{center}
\end{figure}

\section{Conséquences pour les ensembles de rotation}

Pour illustrer la puissance de la théorie du forçage, nous allons expliquer quelques-unes de ses applications à l'étude de l'ensemble de rotation d'un homéomorphisme du tore homotope à l'identité. C'est un invariant dynamique généralisant le nombre de rotation de Poincaré pour les homéomorphismes du cercle, qui mesure à quelle vitesse moyenne les orbites tournent autour du tore. Contrairement au cas du cercle, où il y a des restrictions dues à la présence d'un ordre naturel sur les triplets de points, il n'y a aucune raison pour que ces vitesses soient les mêmes pour tous les points : on obtient en général un ensemble et non un unique vecteur.

Plus formellement, pour $f\in\homeo_0(\T^2)$ et $\check f$ un relevé de $f$ à $\R^2$, on définit
\[\rho(\check f) = \bigcap_{N\in\N}\overline{\bigcup_{n>N}\left\{\frac{\check f^n(z)-z}{n} \mid z\in \R^2\right\}}.\]
On voit très facilement que c'est un sous-ensemble compact non vide de $\R^2$, indépendant du choix du relevé de $f$ à translation de $\Z^2$ près, invariant par conjugaison dans $\homeo_0(\T^2)$ et vérifiant, pour tout $n\in\N^*$, $\rho(\check f^n) = n\rho(\check f)$. Il est un peu plus difficile de voir que cet ensemble est convexe\footnote{C'est par ailleurs une propriété spécifique à la dimension 2.}, ce qui est démontré dans \cite{MR1053617}. La question qui arrive naturellement est de savoir quelles propriétés dynamiques de l'homéomorphisme peuvent être lues sur cet invariant. Par exemple, le théorème d'existence d'orbites périodiques pour un homéomorphisme du cercle de nombre de rotation rationnel se généralise au tore de dimension 2.

\begin{theo}[\cite{MR958891}]\label{ThFranks}
Soit $f\in \homeo_0(\T^2)$ et $\check f$ un relevé de $f$ à $\R^2$. Alors pour tout $p/q\in\inte (\rho(\check f))$ écrit de manière irréductible, il existe un point $z\in\R^2$ tel que $\check f^q(z) = z+p$.
\end{theo}

La première application de la théorie du forçage aux ensembles de rotation que nous décrirons résout un cas dans une conjecture de \cite{MR1021217} ; c'est le premier résultat décrivant un sous-ensemble convexe compact du plan qui n'est l'ensemble de rotation d'aucun homéomorphisme du tore (théorème 62 de \cite{LCT1}).

\begin{theo}\label{ThMZ}
Soit $f\in \homeo_0(\T^2)$ et $\check f$ un relevé de $f$ à $\R^2$. Alors $\partial \rho(\check f)$ ne contient aucun segment à pente irrationnelle ayant un point rationnel dans son intérieur.
\end{theo}

La seconde application concerne les homéomorphismes du tore ayant un ensemble de rotation d'intérieur non vide. Il retrouve le résultat de \cite{MR1101087} concernant la positivité de l'entropie et améliore ceux de \cite{MR3355114} (traitant la régularité $C^{1+\alpha}$) et \cite{MR3835526} (traitant les ensemble de rotation polygonaux). C'est une combinaison des théorèmes 63 et 64 de \cite{LCT1}.

\begin{theo}\label{TheoRhoInt}
Soit $f\in \homeo_0(\T^2)$ et $\check f$ un relevé de $f$ à $\R^2$. On suppose que $\rho(\check f)$ est d'intérieur non vide. Alors
\begin{itemize}
\item $f$ possède un fer à cheval topologique (et donc est d'entropie topologique positive), et
\item il existe $L>0$ tel que pour tout $z\in\R^2$ et tout $n\ge 1$, on ait $d\big(\check f^n(z)-z,\, n\rho(\check f)\big) \le L$.
\end{itemize}
\end{theo}

Notons que la théorie du forçage a justement été construite sur les idées du travail de \cite{MR3835526}.

Le second point du théorème donne une vitesse de convergence vers l'ensemble de rotation. Plus précisément, l'ensemble de rotation peut être défini de la manière alternative suivante : étant donné un domaine fondamental $D\subset \R^2$ du tore, l'ensemble de rotation est la limite au sens de Hausdorff des ensembles $\check f^n(D)/n$. Le second point du théorème exprime que cette convergence se fait à vitesse au pire $O(1/n)$. Malheureusement, la constante $L$ n'est pas vraiment explicite (elle dépend de l'isotopie maximale) et ce théorème ne peut donc pas être utilisé pour des applications au calcul explicite de l'ensemble de rotation.

Terminons les commentaires sur le théorème \ref{TheoRhoInt} en notant qu'il admet un corollaire presque immédiat concernant le nombre de rotation d'une mesure de probabilité invariante $\mu$, qui est défini comme (voir \cite{MR88720})\footnote{On utilise le fait que l'application $\R^2\ni x \mapsto \check f(x)-x$ est $\Z^2$-équivariante et passe donc au quotient $\T^2$.}
\[\rho(\check f, \mu) = \int_{\T^2} \big(\check f(x)-x\big)  dx.\]
Ce corollaire énonce que si l'homéomorphisme, en plus d'avoir un ensemble de rotation d'intérieur non vide, préserve une mesure de probabilité de support total $\mu$, alors le nombre de rotation de $\mu$ est à l'intérieur de l'ensemble de rotation. Cela constitue une réponse positive à une question de Boyland.
\medskip

Les preuves de ces deux derniers théorèmes sont relativement simples comparativement aux autres de la théorie du forçage, parce que dans ce cas les feuilles du feuilletage sont bornées dans le plan, ce qui permet de trouver facilement des intersections transverses, en fait dès que deux orbites ont des directions rotationnelles différentes (proposition 73 de \cite{LCT1}).

\begin{prop}
Si, pour $f\in\homeo_0(\T^2)$, on a $0\in\inte(\rho(\check f))$, alors $f$ possède un itéré tel que les projetés à $\R^2$ des feuilles de tout feuilletage transverse associé à une isotopie maximale sont uniformément bornés.
\end{prop}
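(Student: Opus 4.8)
The plan is to deduce from $0\in\inte\rho(\check f)$ the existence, for a suitable iterate of $f$, of three $\F$-transverse loops carrying three homology classes $v_1,v_2,v_3\in\Z^2$ with $0$ in the interior of their convex hull, and then to show that the $\Z^2$-translates of the lifts of these loops confine every leaf of the lifted foliation to a region of uniformly bounded size. For the reduction: since $\rho(\check f)$ is convex with $0$ in its interior, $\inte\rho(\check f)$ is an open convex neighbourhood of $0$, so for $N$ large $\inte\rho(\check f^N)=N\inte\rho(\check f)$ contains $0$ and the three primitive vectors $v_1=(1,0)$, $v_2=(0,1)$, $v_3=(-1,-1)$, for which $0\in\inte\operatorname{conv}\{v_1,v_2,v_3\}$. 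Set $g=f^N$, $\check g=\check f^N$. As $0\in\inte\rho(\check g)$, Theorem \ref{ThFranks} produces a point fixed by $\check g$, hence a contractible fixed point of $g$; by Theorem \ref{ThExistIstop} this forces $\fix(I)\neq\emptyset$, i.e.\ $\dom(I)\subsetneq\T^2$, for every maximal isotopy $I$ of $g$. We fix such an $I$ together with the transverse foliation $\F$ on $\dom(I)$ provided by Corollary \ref{CoroPatriceFeuillete}; there remains to bound the lifts to $\R^2$ of the leaves of $\F$.

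For $i=1,2,3$, Theorem \ref{ThFranks} applied to $\check g$ and $v_i$ gives $w_i$ with $\check g(w_i)=w_i+v_i$. Its projection $\bar w_i\in\T^2$ is a fixed point of $g$, and the trajectory $I(\bar w_i)$ lifts through $w_i$ to a path ending at $w_i+v_i$, so $I(\bar w_i)$ is a loop representing $v_i\neq0$ in $\pi_1(\T^2)$; in particular it is not contractible, whence $\bar w_i\notin\fix(I)$, i.e.\ $\bar w_i\in\dom(I)$. Corollary \ref{CoroPatriceFeuillete} then yields an $\F$-transverse path $I_\F(\bar w_i)$ homotopic with fixed endpoints to $I(\bar w_i)$; since $g(\bar w_i)=\bar w_i$ this is an $\F$-transverse loop $\Gamma_i$ of homology class $v_i$, and all its iterates $I^k_\F(\bar w_i)$ are $\F$-transverse as well. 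Reducing the self-intersections of $\Gamma_i$ if needed, we assume that each lift of $\Gamma_i$ used below is an embedded line.

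Now pass to $\R^2$: let $\widehat{\dom(I)}$ be the preimage of $\dom(I)$, $\widehat\F$ the lifted foliation, and, for each $i$, $\widehat\Gamma_i$ the bi-infinite line obtained by lifting and unrolling $\Gamma_i$ — a positively $\widehat\F$-transverse proper line invariant under the translation $\tau_{v_i}$ by $v_i$, hence at bounded Hausdorff distance from the straight line $\R v_i$; the same holds for each translate $\tau_u\widehat\Gamma_i$, $u\in\Z^2$, and for fixed $i$ these translates form a family of lines escaping to infinity on both sides of $\R v_i$ with bounded gaps. The key point — of precisely the "sets separating the plane" flavour of the proof of Proposition \ref{PropFond}, and best carried out in the universal cover of $\dom(I)$, where leaves are genuine Brouwer lines — is that a leaf $\widehat\phi$ of $\widehat\F$ meets each $\tau_u\widehat\Gamma_i$ in at most one point: if it met it at $\tau_u\widehat\Gamma_i(s)$ and $\tau_u\widehat\Gamma_i(s')$ with $s<s'$, positive transversality of $\tau_u\widehat\Gamma_i$ would put $\tau_u\widehat\Gamma_i(s')$ in $R(\widehat\phi)$, contradicting that $\tau_u\widehat\Gamma_i(s')$ lies on $\widehat\phi$; and, $\widehat\phi$ being oriented, every such crossing goes from $R(\tau_u\widehat\Gamma_i)$ to $L(\tau_u\widehat\Gamma_i)$ as one runs along $\widehat\phi$ in its positive direction. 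Hence, if $\widehat\phi$ passes through a point $x_0$, following $\widehat\phi$ forward it stays in $\overline{L(\ell)}$ for every line $\ell=\tau_u\widehat\Gamma_i$ with $x_0\in L(\ell)$ (it could only leave $L(\ell)$ by an $L$-to-$R$ crossing, which does not occur); symmetrically, following $\widehat\phi$ backward it stays in $\overline{R(\ell)}$ for every such $\ell$ with $x_0\in R(\ell)$. Since $0\in\inte\operatorname{conv}\{v_1,v_2,v_3\}$, the unit normals $\mu_i$ to the $\widehat\Gamma_i$ positively span $\R^2$; picking in each family a line just above (resp.\ just below) $x_0$, one concludes that the forward half of $\widehat\phi$ lies in $x_0+P_+$ and its backward half in $x_0+P_-$, where $P_\pm$ are bounded polytopes depending only on the $\Gamma_i$. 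Thus every leaf of $\widehat\F$ meeting a fundamental domain has diameter bounded independently of the leaf, and by $\Z^2$-equivariance so does every leaf.

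The main obstacle lies in this last step: rigorously establishing the separation statement and the "one-sided crossing" property — which requires working in the universal cover of $\dom(I)$, keeping track of the deck group, and, relatedly, controlling the (a priori non-embedded) lifts of the transverse loops $\Gamma_i$ (hence the reduction of self-intersections, or the direct treatment of non-embedded transverse paths carried out in \cite{LCT1}). The first two paragraphs are, by contrast, purely formal consequences of Franks' theorem \ref{ThFranks} and of the foliated Brouwer corollary \ref{CoroPatriceFeuillete}.
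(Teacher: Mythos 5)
Votre preuve suit essentiellement la même démarche que celle de l'article : théorème de Franks pour obtenir des points périodiques de vecteurs de rotation prescrits, trajectoires transverses complètes de ces points vues comme droites proprement plongées périodiques sous une translation et bornées transversalement, puis confinement des feuilles par la propriété de traversée à sens unique. La seule variante (cosmétique) est l'emploi de trois directions dont l'enveloppe convexe contient $0$ en son intérieur au lieu des quatre directions cardinales et du quadrillage attracteurs/répulseurs de l'article ; l'étape délicate que vous signalez (séparation et sens de traversée dans le revêtement intermédiaire $\R^2$) est traitée au même niveau d'informalité dans l'exposé.
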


\begin{proof}
Puisque $0\in\inte(\rho(\check f))$, et que $\rho(\check f^n) = n\rho(\check f)$, alors quitte à remplacer $f$ par un de ses itérés, l'ensemble $\rho(\check f^n)$ contient les points $(0,1)$, $(0,-1)$, $(1,0)$ et $(-1,0)$. On utilise alors le théorème \ref{ThFranks} : il existe quatre points $z_{(0,1)}$, $z_{(0,-1)}$, $z_{(1,0)}$ et $z_{(-1,0)}$ vérifiant $\check f(z_v) = z_v+v$ (et donc les projetés sur $\T^2$ de ces points sont fixes par $f$).

Soit alors une isotopie maximale $I$ entre $f$ et l'identité, ainsi qu'un feuilletage associé $\F$ sur $\wt\dom(I)$. On note $\check \F$ le projeté de ce feuilletage sur $\R^2$, ainsi que $\check I$ le relevé de $I$ à $\R^2$. Considérons les trajectoires $\check I_\F^\Z(z_v)$ de chacun des points périodiques $z_v$. Par exemple, la trajectoire $\check I_\F^\Z(z_{(1,0)})$ est obtenue comme le saturé sous l'action des translations entières de vecteur $(n,0)$ d'un chemin borné ; c'est donc une droite topologique orientée proprement plongée, et bornée dans la direction verticale. Les mêmes propriétés sont vérifiées par la trajectoire $\check I_\F^\Z(z_{(-1,0)})$. Par conséquent, il existe $N\in\N$ tel que la famille de trajectoires $\check I_\F^\Z(z_{(1,0)}) + (0,2kN)$ et $\check I_\F^\Z(z_{(-1,0)}) + (0,(2k+1)N)$, pour $k$ parcourant $\Z$, est formée de droites deux à deux disjointes. La même propriété est vérifiée par les $\check I_\F^\Z(z_{(0,1)}) + (0,2kN)$ et $\check I_\F^\Z(z_{(0,-1)}) + (0,(2k+1)N)$. 

Sur la figure \ref{FigFeuilBor}, on voit apparaître certaines régions qui sont des attracteurs pour les feuilles (gros points), et pour chacune d'entre elles, si on appelle $R$ l'union des 9 régions qui lui sont sont adjacentes, alors  toute feuille passant par $R$ a son $\alpha$-limite incluse dans $R$. De même, certaines régions sont des répulseurs pour les feuilles (croix), et pour chacune d'entre elles, si on appelle $A$ l'union des 9 régions qui lui sont sont adjacentes, alors  toute feuille passant par $A$ a son $\omega$-limite incluse dans $A$. On en déduit facilement que les feuilles sont bornées dans $\R^2$, en observant que tout point du plan appartient à au moins une des régions $A$ et une des régions $R$.
\end{proof}

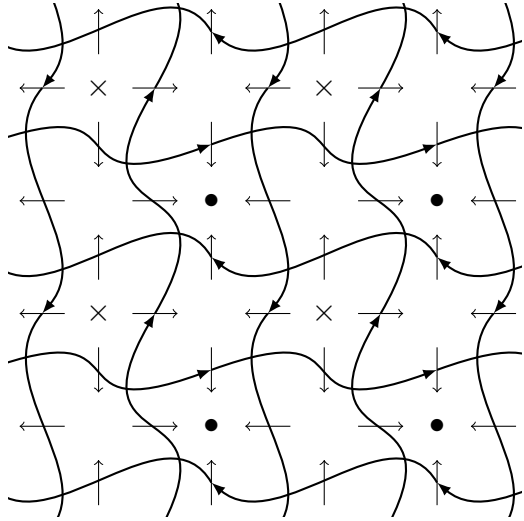
\begin{figure}
\begin{center}
\begin{tikzpicture}[scale=1.5]
\clip(.7,.7) rectangle (5.25,5.25);
\foreach \x in {0,...,3}{
\foreach \y in {0,...,3}{
\draw[thick, ->, >=latex] (2*\x+.5,2*\y) to[in=-160, out=20, looseness=2] (2*\x+2.5,2*\y);
\draw[thick, <-, >=latex] (2*\x+.5,2*\y+1) to[in=-240, out=-40] (2*\x+2.5,2*\y+1);
\draw[thick, ->, >=latex] (2*\x,2*\y+.5) to [in=-120, out=60, looseness=2] (2*\x,2*\y+2.5);
\draw[thick, <-, >=latex] (2*\x+1,2*\y+.5) to [in=-130, out=50, looseness=1] (2*\x+1,2*\y+2.5);

\draw[->] (2*\x-.2,2*\y+.5) -- (2*\x+.2,2*\y+.5);
\draw[->] (2*\x-.2,2*\y+1.5) -- (2*\x+.2,2*\y+1.5);
\draw[<-] (2*\x+.8,2*\y+.5) -- (2*\x+1.2,2*\y+.5);
\draw[<-] (2*\x+.8,2*\y+1.5) -- (2*\x+1.2,2*\y+1.5);
\draw[<-] (2*\x+.5,2*\y-.2) -- (2*\x+.5,2*\y+.2);
\draw[<-] (2*\x+1.5,2*\y-.2) -- (2*\x+1.5,2*\y+.2);
\draw[->] (2*\x+.5,2*\y+.8) -- (2*\x+.5,2*\y+1.2);
\draw[->] (2*\x+1.5,2*\y+.8) -- (2*\x+1.5,2*\y+1.2);

\draw (2*\x+2.5,2*\y+1.5) node {$\bullet$};
\draw (2*\x+1.5,2*\y+2.5) node {$\times$};
}}

\end{tikzpicture}
\caption{Trajectoires transverses associées aux points périodiques $z_v + w$ et directions possibles prises par les feuilles (flèches).}\label{FigFeuilBor}
\end{center}
\end{figure}

Voyons comment cette proposition s'applique pour montrer la positivité de l'entropie.

\begin{proof}[Démonstration du  théorème \ref{TheoRhoInt}]
Considérons les trajectoires $\check I_\F^\Z(z_{(1,0)})$ et $\check I_\F^\Z(z_{(0,1)})$ définies dans la preuve de la proposition précédente, dont on reprend les objets et notations. On voit facilement que certains de leurs relevés s'intersectent $\tilde \F$-transversalement, par le fait que les feuilles du feuilletage sont bornées, et que leurs directions asymptotiques sont différentes. L'application de la proposition fondamentale \ref{PropFond} permet de créer une trajectoire transverse admissible $\gamma^*$ comme sur la figure \ref{FigInterTor} formée de la concaténation d'un bout de la trajectoire $\check I_\F^\Z(z_{(1,0)})$ et d'un bout de la trajectoire $\check I_\F^\Z(z_{(0,1)})$. Puisque les feuilles du feuilletage sont bornées, si ces bouts sont assez longs, alors il existe une translation entière $T$ telle que cette trajectoire possède une intersection $\F$-transverse avec sa translatée $T\gamma^*$. On peut donc appliquer le théorème \ref{ThExistCheval} qui implique l'existence d'un fer à cheval topologique. Cela prouve le premier point du théorème \ref{TheoRhoInt}.

\begin{figure}
\begin{minipage}[c]{0.49\textwidth}
\begin{center}
\begin{tikzpicture}[scale=.6]
\foreach \x in {0,...,2}{
\draw[thick] (2*\x,0) to[in=-190, out=-10, looseness=2] (2*\x+2,0);
\draw[thick] (6,2*\x) to[in=-120, out=60, looseness=2] (6,2*\x+2);

\draw[thick] (2*\x-2.5,-1.5) to[in=-190, out=-10, looseness=2] (2*\x-.5,-1.5);
\draw[thick] (3.5,2*\x-1.5) to[in=-120, out=60, looseness=2] (3.5,2*\x+.5);
}
\draw[->, >=latex] (2,0) to[in=-190, out=-10, looseness=2] (4,0);
\draw[->, >=latex] (6,2) to[in=-120, out=60, looseness=2] (6,4);
\draw[->, >=latex] (-.5,-1.5) to[in=-190, out=-10, looseness=2] (1.5,-1.5);
\draw[->, >=latex] (3.5,.5) to[in=-120, out=60, looseness=2] (3.5,2.5);

\draw (6,2) node[right] {$\gamma^*$};
\draw (3.5,2) node[right] {$T\gamma^*$};
\end{tikzpicture}
\caption{Trajectoire $\gamma^*$ utilisée pour la preuve du théorème \ref{TheoRhoInt}, et sa translatée avec laquelle elle a une intersection $\F$-transverse.}\label{FigInterTor}
\end{center}
\end{minipage}
\hfill
\begin{minipage}[c]{0.49\textwidth}
\begin{center}
\begin{tikzpicture}[scale=.3]
\foreach \x in {0,...,2}{
\draw (2*\x,0) to[in=-190, out=-10, looseness=2] (2*\x+2,0);
\draw[very thick, dashed] (2*\x,0) to[in=-190, out=-10, looseness=2] (2*\x+2,0);
\draw (6,2*\x) to[in=-120, out=60, looseness=2] (6,2*\x+2);
\draw (2*\x+12,1) to[in=-190, out=-10, looseness=2] (2*\x+14,1);
\draw (18,2*\x+1) to[in=-120, out=60, looseness=2] (18,2*\x+3);
}

\draw (6,5) node[right] {$\gamma^*+v_0$};
\draw (18,6) node[right] {$\gamma^*+v_1$};

\draw (1,2) node{$\bullet$} node[below]{$z$} to[in=-180, out=0, looseness=2] (6,2) to[in=-190, out=0, looseness=2] (18,3) to[in=-190, out=-10, looseness=1] (21,2) node{$\bullet$} node[below]{$\check f^n(z)$};

\draw[very thick, dashed] (6,2) to[in=-190, out=0, looseness=2] (18,3) ;
\draw[very thick, dashed](6,0) to[in=-120, out=60, looseness=2] (6,2);
\draw[very thick, dashed] (18,3) to[in=-120, out=60, looseness=2] (18,5) to[in=-120, out=60, looseness=2] (18,7);

\draw[->] (9,0) node[below]{$\gamma(t_0)$} to[bend right] (6.3,1.7);
\draw[->] (21,3.5) node[right]{$\gamma(t_1)$} to[bend right] (18.5,3.3);
\end{tikzpicture}
\caption{Comment créer une trajectoire $\F$-transverse admissible proche de la trajectoire $\gamma$ de $z$ ayant une auto-intersection (en pointillés).}\label{FigInterTor2}
\end{center}
\end{minipage}
\end{figure}

\medskip

La preuve du second point se base sur le même type d'arguments. Supposons qu'il existe $L>0$ \og grand\fg{} ainsi que $n\in\N$ et $z\in\R^2$ tels que $d(\check f^n(z)-z,\, n\rho(\check f)) \ge L$. Considérons la trajectoire $\gamma = \check I_\F^n(z)$. Si cette trajectoire possède une auto-intersection transverse en $\gamma(t_0) + v = \gamma(t_1)$, avec $v\in\Z^2$, $t_0$ proche de 0 et $t_1$ proche de $n$, alors la version rotationnelle du théorème \ref{ThExistCheval} nous donne un fer à cheval rotationnel en temps $m \simeq t_1-t_0$, et en particulier l'existence d'un vecteur $v/m$ dans l'ensemble de rotation. Un petit calcul permet de voir que si $L$ est assez grand et $m$ assez proche de $n$, alors le vecteur $v/m$ n'est pas dans l'ensemble de rotation $\rho(\check f)$, ce qui est une contradiction.

Si une telle intersection transverse n'existe pas, on en crée une artificiellement en se servant du chemin $\gamma^*$ et de la proposition fondamentale (voir la figure \ref{FigInterTor2}) : le chemin $\gamma$ a une intersection transverse avec $\gamma^*+v_0$ en $\gamma(t_0) = \gamma^*(t_0')+v_0$, avec $t_0$ \og proche\fg{} de 0 ; de même $\gamma$ a une intersection transverse avec $\gamma^*+v_1$ en $\gamma(t_1) = \gamma^*(t_1')+v_1$, avec $t_1$ \og proche\fg{} de $n$. Quitte à allonger le début et la fin de $\gamma^* : [0,T]\to \R^2$, on voit qu'il existe $v_2$ \og proche\fg{} de $v_1-v_0$, tel que $\gamma^*|_{[0,t_0']} + v_2$ ait une intersection transverse avec $\gamma^*|_{[t_1',T]}$. Ainsi, le chemin $\big(\gamma^*|_{[0,t_0']} + v_0\big)\gamma|_{[t_0,t_1]}\big( \gamma^*|_{[t_1',T]}+v_1\big)$ est admissible, transverse, et possède une auto-intersection transverse ; comme au-dessus un calcul montre que si $L$ est assez grand, alors on crée un nouveau vecteur de rotation en dehors de l'ensemble de rotation, ce qui est bien évidemment une contradiction.
\end{proof}

La preuve du théorème \ref{ThMZ} sur la forme du bord de l'ensemble de rotation utilise le même type d'arguments. Supposant que 0 est à l'intérieur d'une composante de bord de $\rho(\check f)$ qui est un segment à pente irrationnelle, on commence par voir que là aussi les feuilles du feuilletage transverse sont bornées. L'application du théorème ergodique d'\cite{MR419727} (outil classique de la théorie de la rotation) combinée à la théorie du forçage permettent ensuite de trouver un vecteur de rotation en dehors de l'ensemble de rotation : on aboutit à une contradiction.



\bibliographystyle{smfalpha}
\bibliography{template.bib}

\providecommand{\bysame}{\leavevmode ---\ }
\providecommand{\og}{``}
\providecommand{\fg}{''}
\providecommand{\smfandname}{\&}
\providecommand{\smfedsname}{\'eds.}
\providecommand{\smfedname}{\'ed.}
\providecommand{\smfmastersthesisname}{M\'emoire}
\providecommand{\smfphdthesisname}{Th\`ese}
\begin{thebibliography}{BCLR16}

\bibitem[Atk76]{MR419727}
{\scshape G.~Atkinson} -- {\og Recurrence of co-cycles and random walks\fg},
  \emph{J. London Math. Soc. (2)} \textbf{13} (1976), no.~3, p.~486--488.

\bibitem[AZ15]{MR3355114}
{\scshape S.~Addas-Zanata} -- {\og Uniform bounds for diffeomorphisms of the
  torus and a conjecture of {B}oyland\fg}, \emph{J. Lond. Math. Soc. (2)}
  \textbf{91} (2015), no.~2, p.~537--553.

\bibitem[BCLR16]{bguin2016fixed}
{\scshape F.~Béguin, S.~Crovisier {\normalfont \smfandname} F.~Le~Roux} --
  {\og Fixed point sets of isotopies on surfaces\fg}, 2016, \`A para\^itre au
  Journal de l'EMS.

\bibitem[Bir13]{MR1500933}
{\scshape G.~D. Birkhoff} -- {\og Proof of {P}oincar\'{e}'s geometric
  theorem\fg}, \emph{Trans. Amer. Math. Soc.} \textbf{14} (1913), no.~1,
  p.~14--22.

\bibitem[Bro12]{MR1511684}
{\scshape L.~E.~J. Brouwer} -- {\og Beweis des ebenen {T}ranslationssatzes\fg},
  \emph{Math. Ann.} \textbf{72} (1912), no.~1, p.~37--54.

\bibitem[CT16]{conejeros2016}
{\scshape J.~Conejeros {\normalfont \smfandname} F.~A. Tal} -- {\og Existence
  of non-contractible periodic orbits for homeomorphisms of the open
  annulus\fg}, 2016, À paraître dans math. Z.

\bibitem[CT19]{conejeros2019}
{\scshape J.~Conejeros {\normalfont \smfandname} F.~Tal} -- {\og Applications
  of forcing theory to homeomorphisms of the closed annulus\fg}, 2019.

\bibitem[D\'18]{MR3835526}
{\scshape P.~D\'{a}valos} -- {\og On annular maps of the torus and sublinear
  diffusion\fg}, \emph{J. Inst. Math. Jussieu} \textbf{17} (2018), no.~4,
  p.~913--978.

\bibitem[FH12]{MR3033517}
{\scshape J.~Franks {\normalfont \smfandname} M.~Handel} -- {\og Entropy zero
  area preserving diffeomorphisms of {$S^2$}\fg}, \emph{Geom. Topol.}
  \textbf{16} (2012), no.~4, p.~2187--2284.

\bibitem[FM90]{MR1021217}
{\scshape J.~Franks {\normalfont \smfandname} M.~Misiurewicz} -- {\og Rotation
  sets of toral flows\fg}, \emph{Proc. Amer. Math. Soc.} \textbf{109} (1990),
  no.~1, p.~243--249.

\bibitem[Fra89]{MR958891}
{\scshape J.~Franks} -- {\og Realizing rotation vectors for torus
  homeomorphisms\fg}, \emph{Trans. Amer. Math. Soc.} \textbf{311} (1989),
  no.~1, p.~107--115.

\bibitem[Gui94]{MR1273787}
{\scshape L.~Guillou} -- {\og Th\'{e}or\`eme de translation plane de {B}rouwer
  et g\'{e}n\'{e}ralisations du th\'{e}or\`eme de
  {P}oincar\'{e}-{B}irkhoff\fg}, \emph{Topology} \textbf{33} (1994), no.~2,
  p.~331--351.

\bibitem[Jau14]{MR3329669}
{\scshape O.~Jaulent} -- {\og Existence d'un feuilletage positivement
  transverse \`a un hom\'{e}omorphisme de surface\fg}, \emph{Ann. Inst. Fourier
  (Grenoble)} \textbf{64} (2014), no.~4, p.~1441--1476.

\bibitem[Kat80]{MR573822}
{\scshape A.~Katok} -- {\og Lyapunov exponents, entropy and periodic orbits for
  diffeomorphisms\fg}, \emph{Inst. Hautes \'{E}tudes Sci. Publ. Math.} (1980),
  no.~51, p.~137--173.

\bibitem[LC04]{PatriceFeuille1}
{\scshape P.~Le~Calvez} -- {\og Une version feuillet\'{e}e du th\'{e}or\`eme de
  translation de {B}rouwer\fg}, \emph{Comment. Math. Helv.} \textbf{79} (2004),
  no.~2, p.~229--259.

\bibitem[LC05]{PatriceFeuille2}
\bysame , {\og Une version feuillet\'{e}e \'{e}quivariante du th\'{e}or\`eme de
  translation de {B}rouwer\fg}, \emph{Publ. Math. Inst. Hautes \'{E}tudes Sci.}
  (2005), no.~102, p.~1--98.

\bibitem[LC06]{MR2275671}
\bysame , {\og From {B}rouwer theory to the study of homeomorphisms of
  surfaces\fg}, in \emph{International {C}ongress of {M}athematicians. {V}ol.
  {III}}, Eur. Math. Soc., Z\"{u}rich, 2006, p.~77--98.

\bibitem[Lel19]{Gabriel}
{\scshape G.~Lellouch} -- {\og Sur les ensembles de rotation des
  homéomorphismes de surface en genre $\ge 2$\fg}, \smfphdthesisname, Sorbonne
  Université, 2019.

\bibitem[LM91]{MR1101087}
{\scshape J.~Llibre {\normalfont \smfandname} R.~S. MacKay} -- {\og Rotation
  vectors and entropy for homeomorphisms of the torus isotopic to the
  identity\fg}, \emph{Ergodic Theory Dynam. Systems} \textbf{11} (1991), no.~1,
  p.~115--128.

\bibitem[LT18a]{LCT1}
{\scshape P.~{Le Calvez} {\normalfont \smfandname} F.~Tal} -- {\og Forcing
  theory for transverse trajectories of surface homeomorphisms\fg},
  \emph{Invent. Math.} \textbf{212} (2018), no.~2, p.~619--729.

\bibitem[LT18b]{LCT2}
\bysame , {\og Topological horseshoes for surface homeomorphisms\fg}, 2018.

\bibitem[MA12]{IDM}
{\scshape P.~Massot {\normalfont \smfandname} M.-C. Arnaud} -- {\og L’anneau
  d’henri poincaré\fg}, \emph{Images des Mathématiques, CNRS} (2012).

\bibitem[Mis80]{MR660643}
{\scshape M.~Misiurewicz} -- {\og Horseshoes for continuous mappings of an
  interval\fg}, in \emph{Dynamical systems ({B}ressanone, 1978)}, Liguori,
  Naples, 1980, p.~125--135.

\bibitem[MZ89]{MR1053617}
{\scshape M.~Misiurewicz {\normalfont \smfandname} K.~Ziemian} -- {\og Rotation
  sets for maps of tori\fg}, \emph{J. London Math. Soc. (2)} \textbf{40}
  (1989), no.~3, p.~490--506.

\bibitem[Ree81]{MR616561}
{\scshape M.~Rees} -- {\og A minimal positive entropy homeomorphism of the
  {$2$}-torus\fg}, \emph{J. London Math. Soc. (2)} \textbf{23} (1981), no.~3,
  p.~537--550.

\bibitem[Sch57]{MR88720}
{\scshape S.~Schwartzman} -- {\og Asymptotic cycles\fg}, \emph{Ann. of Math.
  (2)} \textbf{66} (1957), p.~270--284.

\bibitem[Sha64]{Sarko}
{\scshape O.~M. Sharkovsky} -- {\og Co-existence of cycles of a continuous
  mapping of the line into itself\fg}, \emph{Ukrain. Mat. \v{Z}.} \textbf{16}
  (1964), p.~61--71.

\end{thebibliography}

\end{document}